\newtheorem{theorem}{Theorem}[section]
\newtheorem{definition}[theorem]{Definition}
\theoremstyle{remark}
\newtheorem{rmks}{Remarks}[section]
\newtheorem*{Examples}{Examples}
\newtheorem*{claim}{Claim}
\newcommand{\C}{\mathcal{C}}
\newcommand{\G}{\mathcal{G}}
\newcommand{\N}{\mathbb{N}}
\renewcommand{\P}{\mathcal{P}}
\newcommand{\Q}{\mathcal{Q}}
\newcommand{\U}{\mathcal{U}}
\newcommand{\V}{\mathcal{V}}
\newcommand{\W}{\mathcal{W}}
\newcommand{\prefx}{\preceq}
\newcommand{\exts}{\succeq}
\newcommand{\Nat}{\mathrm{Nat}}
\definecolor{purple}{rgb}{.9,0.2,.9}
\newcommand{\cs}{2^\omega}
\newcommand{\uh}{{\upharpoonright}}
\renewcommand{\phi}{\varphi}
\newcommand{\str}{2^{<\omega}}
\newcommand{\leqx}{\preceq}
\newcommand{\tuple}[1]{\langle #1 \rangle}
	\definecolor{lightblue}{rgb}{.60,.60,1}
\title{On the interplay between effective notions of randomness and genericity}
\author{Laurent Bienvenu, Christopher P. Porter}
\begin{document}

\maketitle

\begin{abstract}
In this paper, we study the power and limitations of computing effectively generic sequences using effectively random oracles.  Previously, it was known that every 2-random sequence computes a 1-generic sequence (as shown by Kautz) and every 2-random sequence forms a minimal pair in the Turing degrees with every 2-generic sequence (as shown by Nies, Stephan, and Terwijn).  We strengthen these results by showing that every Demuth random sequence computes a 1-generic sequence (which answers an open question posed by Barmpalias, Day, and Lewis) and that every Demuth random sequence forms a minimal pair with every pb-generic sequence (where pb-genericity is an effective notion of genericity that is strictly between 1-genericity and 2-genericity).  Moreover, we prove that for every comeager $\G\subseteq \cs$, there is some weakly 2-random sequence $X$ that computes some $Y\in\G$, a result that allows us to provide a fairly complete classification as to how various notions of effective randomness interact in the Turing degrees with various notions of effective genericity.

\end{abstract}

\section{Introduction}

Randomness and genericity play an important role in computability theory in that they both, in their own way, define what it means for an infinite binary sequence~$X$ to be typical among all infinite binary sequences. For any reasonable way of defining randomness and genericity, these two notions are orthogonal, i.e., a random sequence cannot be generic and a generic sequence cannot be random. Moreover, for sufficient levels of randomness and genericity, this orthogonality goes even further: Nies, Stephan and Terwijn~\cite{NiesST2005} showed that a 2-random sequence and a 2-generic sequence always form a minimal pair in the Turing degrees.

At lower levels of randomness and genericity, however, the situation is more nuanced. For example, by the Ku\v cera-G\'acs theorem, any sequence is computed by some $1$-random sequence, thus, in particular, any $n$-generic sequence (for any~$n$) is computed by some $1$-random sequence. Another striking result, due to Kautz~\cite{Kautz1991} (building on the work of Kurtz~\cite{Kurtz1981}), is that \emph{every} 2-random sequence \emph{must} compute some $1$-generic sequence. It therefore makes sense to ask how random sequences and generic sequences interact for levels of randomness between $1$-randomness and $2$-randomness, and for levels of genericity between $1$-genericity and $2$-genericity. This is precisely the purpose of this paper. As we will see, the notion of randomness that has the most interesting interactions with genericity turns out to be Demuth randomness. In particular, we answer positively a question of Barmpalias, Day, and Lewis~\cite{BarmpaliasDL2013}, who asked whether every Demuth random sequence computes a $1$-generic sequence.

\section{Notation and background}

While we assume the reader is familiar with computability theory, let us briefly recall some basic definitions. We work in the Cantor space $\cs$, that is, the space of infinite binary sequences endowed with the product topology, i.e., the topology generated by the cylinders~$[\sigma]$, where $\sigma$ is a finite binary sequence (also referred to as \emph{string}) and $[\sigma]$ is the subset of $\cs$ consisting of the $X$'s that have $\sigma$ as a prefix. We denote by $\str$ the set of all binary strings, $\Lambda$ the empty string. For a string $\sigma$, $|\sigma|$ is the length of $\sigma$ and if $n \leq |\sigma|$, $\sigma \uh n$ is the prefix of $\sigma$ of length~$n$. We also use $X \uh n$ when $X \in \cs$ to denote the prefix of $X$ of length~$n$. The prefix relation is denoted by $\leqx$. An open subset of $\cs$ is a set of type $\bigcup_{\sigma \in S} [\sigma]$ for some countable set~$S$ of strings; when $S$ is computably enumerable, we say that the open set is $\emph{effectively open}$. 

Let us briefly recall the definitions of genericity and of weak genericity. For $n \geq 1$, we say that $X$ is \emph{weakly $n$-generic} if $X$ belongs to every dense open set $\U$ that is effectively open relative to $\emptyset^{(n-1)}$. We say that \emph{$n$-generic} if for every open set $\U$ that is effectively open relative to $\emptyset^{(n-1)}$, $X$ belongs to $\U \cup \overline{\U}^c$ (where $\overline{\U}^c$ is the complement of the closure of $\U$). Equivalently, $X$ is weakly $n$-generic if for every dense set $S$ of strings\footnote{Here ``dense" should be understood relative to the prefix order: a set $S$ of strings is dense if every string $\sigma\in\str$ has an extension in~$S$.} that is c.e.\ relative to $\emptyset^{(n-1)}$, some prefix of $X$ is in~$S$, and $X$ is $n$-generic if for set $S$ of strings that is c.e.\ relative to $\emptyset^{(n-1)}$, either some prefix of $X$ is in~$S$ or  some prefix of~$X$ has no extension in~$S$. For every~$n$, the following relations hold:

\begin{center}
weak $(n+1)$-genericity $\Rightarrow$ $n$-genericity $\Rightarrow$ weak $n$-genericity
\end{center}

The Lebesgue, or uniform, measure $\mu$ on $\cs$ is the measure corresponding to the random process where each bit has value $0$ with probability $1/2$ independently of all other bits. Equivalently, $\mu$ is the unique measure on~$\cs$ such that $\mu([\sigma])= 2^{-|\sigma|}$ for all~$\sigma \in \str$. For any measurable subset $\mathcal{A}$ of $\cs$ and $\sigma \in \str$, we write $\mu(\mathcal{A} \mid \sigma)$ for $\mu(\mathcal{A} \cap [\sigma])/\mu([\sigma])$. When we talk about `randomness' or `random' objects, we implicitly mean `with respect to Lebesgue measure'. 

A \emph{Martin-L\"of test} is a sequence $(\U_n)_{n \in \omega}$ of uniformly effectively open sets such that $\mu(\U_n) \leq 2^{-n}$ for all~$n$. An $X \in \cs$ is \emph{Martin-L\"of random} if for every Martin-L\"of test, $X \notin \bigcap_n \U_n$.
The other notions of randomness we will encounter in this paper will be recalled as we proceed.

\section{Demuth randomness vs. effective genericity}

In this section, we shall see how Demuth randomness interacts with various notions of effective genericity. Our main result is that every Demuth random sequence computes a $1$-generic sequence. The proof has two components. First, we shall review the classical proof that a sufficiently random sequence computes a $1$-generic sequence, and show that the failure set (the set of $X$'s that fail to compute a $1$-generic) can be covered by a specific type of randomness test. We will then show that tests of that type in fact characterize Demuth randomness and obtain the desired result.  

\subsection{Fireworks arguments}

Kautz's proof that almost every~$X\in\cs$ computes a $1$-generic sequence is framed in a way that is difficult to precisely analyze in terms of algorithmic randomness (i.e., to determine how random~$X$ needs to be for the argument to work). A more intuitive proof can be given using a \emph{fireworks argument} (which takes its name from the presentation by Rumyantsev and Shen~\cite{RumyantsevS2014} with an analogy about purchasing fireworks from a purportedly corrupt fireworks salesman), an approach that is more suitable for our purposes. The mechanics of fireworks arguments are already thoroughly explained in~\cite{RumyantsevS2014} and \cite{BienvenuP2017}, but we shall review them for the sake of completeness. 

For now, we will set aside notions of algorithmic randomness and state the result we want to prove as follows: ``For every $k$, we can, uniformly in~$k$, design a probabilistic algorithm that produces a $1$-generic sequence with probability $\geq 1-2^{-k}$."

Let us thus fix~$k\in\omega$. Let $(W_e)_{e\in\omega}$ be an effective enumeration of all c.e.\ sets of strings.  We must satisfy for each $e\in\omega$ the following requirement:
\begin{itemize}
\item[] $\mathcal{R}_e$:  There is some $\sigma\prec \Phi_k^X$ such that either $\sigma\in W_e$ or for all $\tau\succeq\sigma,\  \tau\notin W_e$.
\end{itemize}

The strategy to satisfy requirement $\mathcal{R}_e$ is as follows. When the strategy receives attention for the first time, the first thing it does is pick an integer $n(e,k)$ at random between $1$ and $N(e,k)$ (all integers in this interval being assigned the same probability), where $N(\cdot,\cdot)$ is a fixed computable function to be specified later. The strategy then makes a \emph{passive guess} that there is no extension of the current prefix $\sigma$ of~$X$ in $W_e$, and moves on to building~$X$ by taking care of strategies for other requirements. If this guess is correct, the requirement $\mathcal{R}_e$ is satisfied and the strategy will have succeeded without having to do anything (this is why we call our guess ``passive", as it requires no action from the strategy). Of course this passive guess may also be incorrect; that is, there may be some extension of $\sigma$ in~$W_e$. If that is the case, this will become apparent at some point (because $W_e$ is c.e.), but by the time it does, a longer prefix $\sigma'$ of~$X$ may have been built that does not have an extension in~$W_e$. The strategy then makes a second passive guess that $\sigma'$ has no extension in~$W_e$, and again moves on to other strategies until this second guess is proven wrong. Again it will make a new passive guess, and so on. 

Now one needs to avoid the undesirable case where the strategy makes infinitely many wrong passive guesses during the construction, for otherwise it might fail to satisfy~$\mathcal{R}_e$. To avoid this situation, we use $n(e,k)$ as a cap on the number of passive guesses that the strategy is allowed to make. Once the strategy has made $n(e,k)$ passive guesses and each time has realized that the guess was wrong, it will then make an \emph{active guess}; that is, it will guess that there \emph{is} an extension~$\tau$ of the current prefix $\sigma$ of~$X$ in $W_e$ and will wait for such an extension to be enumerated into $W_e$.  While waiting, the actions of all the other strategies are put on hold. If indeed an extension $\tau$ of $\sigma$ is enumerated into $W_e$, take~$\tau$ as the new prefix of~$X$, declare $\mathcal{R}_e$ to be satisfied and terminate (allowing the other active strategies to resume). 

There are three possible outcomes for our strategy:
\begin{itemize}
\item[(i)] After some wrong passive guesses, the strategy eventually makes a correct passive guess.
\item[(ii)] After making $n(e,k)$ wrong passive guesses, the strategy makes a correct active guess. 
\item[(iii)] After making $n(e,k)$ wrong passive guesses, the strategy makes a wrong active guess. 
\end{itemize}

As we have seen, the first two outcomes ensure the satisfaction of the requirement~$\mathcal{R}_e$. The third outcome is the bad case: if the strategy makes a wrong active guess, it will wait in vain for an extension of the current prefix of~$X$ to appear in $W_e$, and all the actions of other strategies are stopped during this waiting period, so the algorithm fails to produce an infinite binary sequence~$X$. We claim that the probability that the strategy for $\mathcal{R}_e$ has outcome (iii) is at most $\frac{1}{N(e,k)}$. Indeed, assuming all values $n(e',k)$ for $e' \not= e$ have been chosen, there at most one value $n(e,k)$ can take that would cause the strategy to have outcome (iii). To see why this is the case, we make the trivial observation that when a strategy makes a wrong passive guess, if it had instead made an active guess, this active guess would have been correct, and vice versa. Thus if the strategy ends up in case (iii) after making $m$ wrong passive guesses and one last active guess, then any cap $m'<m$ on the number of passive guesses would have given outcome (ii) instead, and any $m'>m$ would have given outcome (i), as the $m$-th passive guess would have been correct. 

This shows that, conditional to \emph{any fixed choice} of the $n(e',k)$ for $e' \not= e$, the probability for the  $\mathcal{R}_e$-strategy to have outcome (iii) is at most $\frac{1}{N(e,k)}$, since the value of $n(e,k)$ is chosen randomly between $1$ and $N(e,k)$, independently of the $n(e',k)$ for $e' \not= e$. Thus the unconditional probability for the $\mathcal{R}_e$-strategy to have outcome (iii) is at most $\frac{1}{N(e,k)}$. (Technically, we cannot really talk about conditional probability since any fixed choice of $n(e',k)$ for $e' \not= e$ is a probability-$0$ event; what we are actually appealing to is Fubini's theorem, which says that for $f(\cdot,\cdot)$ a measurable function defined on the product of two probability spaces, $\int_x \int_y f(x,y) dx dy = \int_y \int_x f(x,y) dy dx$. In particular, if for every fixed $y$ we have $\int_x f(x,y) dx \leq \varepsilon$, then $\int_{(x,y)} f(x,y) dx dy \leq \varepsilon$; in our case $x$ is $n(e,k)$, $y$ is the sequence of $n(e',k)$ for $e' \not= e$ and  $f$ is the characteristic function of the failure set of the algorithm). 

Over all strategies, the probability of failure of our algorithm is therefore bounded by $\sum_e \frac{1}{N(e,k)}$. So if we take, for example, $N(e,k) = 2^{e+k+1}$, the probability of failure is at most $2^{-k}$, as desired. 

\subsection{How much randomness is needed for fireworks arguments?}

The probabilistic algorithm with parameter~$k$ presented above can be interpreted by a Turing functional $\Psi_k$ which has access to an oracle~$X\in\cs$ chosen at random (with respect to the uniform measure). Since the only probabilistic part of the above algorithm is the choice of the numbers $n(e,k)$, we can assume that $\Psi_k$ splits its oracle~$X$ into blocks of bits of length $l(e,k)$, $e \in \omega$, where $2^{l(e,k)}=N(e,k)$ (for this we need $N(e,k)$ to be a power of two, which we can assume is the case without loss of generality) and interpret the block of bits of~$X$ of size $l(e,k)$ as the integer $n(e,k,X)$. 

Let $\mathcal{F}_{e,k}$ be the set of $X$'s which cause $\Psi_k$ to fail because of the $\mathcal{R}_e$-strategy having outcome (iii). We already know that $\mathcal{F}_{e,k}$ has measure at most $\frac{1}{N(e,k)}$. Analyzing the above algorithm, we see that $\mathcal{F}_{e,k}$ is the difference of two effectively open sets:  the effectively open set $\mathcal{U}_{e,k}$ of $X$'s that cause the $\mathcal{R}_e$ strategy of $\Psi_k$ to make an active guess, which is a $\Sigma_1$-event, minus the effectively open set $\mathcal{V}_{e,k}$ of $X$'s that cause the $\mathcal{R}_e$ strategy of $\Psi_k$ to make an active guess \emph{and} this active guess turns out to be correct, which is also a $\Sigma_1$ event.

If an~$X$ belongs to only finitely many $\mathcal{F}_{e,k}=\mathcal{U}_{e,k} \setminus \mathcal{V}_{e,k}$, this means that for $k$ large enough it does not belong to $\mathcal{F}_{e,k}$ for any~$e$, meaning that for sufficiently large $k$, ~$\Psi_k^X$ succeeds in producing a $1$-generic. Recall that $\mathcal{F}_{e,k}$ has measure at most $\frac{1}{N(e,k)}$, and the function~$N(\cdot,\cdot)$ can be chosen as large as we need it to be. We can take $N(e,k)=2^{-\tuple{e,k}}$ and combine the $\mathcal{F}_{e,k}$ into a single sequence of sets by taking $\mathcal{F}'_i = \mathcal{F}_{e,k}$ when $i = \tuple{e,k}$, which ensures that $\mathcal{F}'_i$ has measure at most $2^{-i}$. 

To sum up, we need~$X$ to only belong to finitely many sets in a sequence $(\mathcal{F}'_i)_{i \in \omega}$ where each $\mathcal{F}'_i$ is the difference of two effectively open sets (uniformly in~$i$) and $\mathcal{F}'_i$ has measure at most $2^{-i}$ (by the Borel-Cantelli lemma, almost every~$X$ has this property). This is very similar to the notion of difference tests, introduced by Franklin and Ng~\cite{FranklinN2011}. A difference test is precisely a sequence $(\mathcal{D}_i)_{i\in\omega}$ where each $\mathcal{D}_i$ is a difference of two effectively open sets (uniformly in~$i$) and has measure at most $2^{-i}$, just like in our case. However, the passing condition for difference randomness is weaker than what we need: $X\in\cs$ passes a difference test if it does not belong to all of the $\mathcal{D}_i$'s, and we say that~$X$ is difference random if it passes all difference tests (which, as proven by Franklin and Ng, is equivalent to $X$ being Martin-L\"of random and not computing $\emptyset'$). In our case we need $X$ to not belong to infinitely many $\mathcal{F}'_i$, the so-called \emph{Solovay passing condition}. 

This lead the authors, in early presentation of this work, to propose the notion of ``strong difference randomness", where $X$ would be said to be strongly difference random if for any difference test $(\mathcal{D}_i)_{i\in\omega}$, $X$ belong to at most finitely many $\mathcal{D}_i$'s. However, as observed by Hoyrup (private communication), this is not a robust randomness notion as it depends on the bound we put on the measure of $\mathcal{D}_i$. That is, if we defined instead a difference test by requiring that each $\mathcal{D}_i$ has measure at most $1/(i+1)^2$ instead of $2^{-i}$, the Borel-Cantelli lemma would still tell us that almost every~$X$ passes the test, but it is not clear that a test with a $1/(i+1)^2$ bound can be covered by one or several tests with a $2^{-i}$ bound. Indeed, what we would normally like to do to convert a such a test $(\mathcal{D}_i)_{i\in\omega}$ with $\mu(\mathcal{D}_i) \leq 1/(i+1)^2$ for each~$i$ into a test $(\mathcal{D}'_i)_{i\in\omega}$ with $\mu(\mathcal{D}'_i) \leq 2^{-i}$ such that an $X$ failing the first also fails the second is apply the following technique: Construct a computable sequence of integers $j_1 < j_2 < \ldots$ such that for every $i$, $\sum_{k \geq j_i} 1/(k+1)^2 \leq 2^{-i}$, which can be done since $\sum_k 1/(k+1)^2$ is a computable sum. Then for each~$i$, we set
\[
\mathcal{D}'_i = \mathcal{D}_{j_i} \cup \mathcal{D}_{j_i+1} \ldots \cup \mathcal{D}_{j_{i+1}-1}.
\]
By definition of the $j_i$'s, we have $\mu(\mathcal{D}'_i) \leq 2^{-i}$, and if an~$X$ belongs to infinitely many sets $\mathcal{D}_i$, it also belongs to infinitely many sets $\mathcal{D}'_i$. This for example would work if the $\mathcal{D}_i$'s were effectively open sets, but the problem here is that they are differences of two open sets, which may no longer be the case for the $\mathcal{D}'_i$ we constructed: we only know they are a finite unions of differences of effectively open sets (equivalently, Boolean combinations of effectively open sets as every Boolean combination can be written in this form). 

To get a more robust randomness notion that corresponds to what we need for fireworks arguments to work, we have two main options:

\begin{itemize}
\item Either we keep the bound $2^{-i}$, but allow each level $\mathcal{D}_i$ of the test to be a finite union of differences of two effectively open sets (presented as a finite list of indices for these sets uniformly in~$i$) with the Solovay passing condition. (By the above argument, in that case, the bound $2^{-i}$ can be replaced by any $a_i$ such that $\sum_{a_i}$ is finite and is a computable real number without changing the power of the family of tests).
\item Or we allow the measure bound to vary, i.e., we allow all tests of type $(\mathcal{D}_i)_{i\in\omega}$ such that each $\mathcal{D}_i$ is a difference of two effectively open sets uniformly in~$i$,   and there is a computable sequence $(a_i)_{i\in\omega}$ of rationals such that the sum $\sum_i a_i$ is finite (and possibly computable), such that $\mu(\mathcal{D}_i) < a_i$ (and again use the Solovay passing condition for these tests). 
\end{itemize}

The second approach seems to give a new randomness notion, which probably deserves to be studied. But one would first have to decide whether it is more natural to require the sum $\sum_i a_i$ to be computable or simply finite, prove that it does differ from existing randomness notions, etc. This would take us beyond the scope of this paper. 

The first approach is just as natural, and has the non-negligible advantage to take us back to an existing randomness notion: Demuth randomness. This is one of the central randomness notions between $1$-randomness and $2$-randomness, which has received a lot of attention recently; see, for example,~\cite{BienvenuDGNT2014,GreenbergT2014,KuceraN2011}. Demuth randomness is defined as follows. We fix an effective enumeration $(\mathcal{W}_e)_{e \in \omega}$ of all effectively open sets. A Demuth test is a sequence $(\mathcal{W}_{g(n)})_{n \in \omega}$ where $g$ is an $\omega$-c.a.\ function (that is, $g \leq_{wtt} \emptyset'$, or equivalently, $g$ is a $\Delta^0_2$ function which has a computable approximation $g(n,s)$ such that for every~$n$ the number of $s$ such that $g(n,s) \not= g(n,s+1)$ is bounded by $h(n)$ for some fixed computable function $h$) and $\mu(\mathcal{W}_{g(n)}) \leq 2^{-n}$ for all~$n$. $X\in\cs$ is Demuth random if it only belongs to at most finitely many levels of any given Demuth test $(\mathcal{W}_{g(n)})_{n \in \omega}$ (as shown by Ku\v cera and Nies~\cite{KuceraN2011}, the notion is independent of the bound, in that one can take any other sequence $(a_n)_{n\in\omega}$ in place of $2^{-n}$, as long as $\sum_n a_n$ is finite and computable). One can also define weak Demuth randomness by changing the passing condition: $X$ is weakly Demuth random if for any Demuth test $(\mathcal{W}_{g(n)})_{n \in \omega}$, $X \notin \bigcap_n \mathcal{W}_{g(n)}$. This is a strictly weaker notion: indeed, weak Demuth randomness is implied by weak 2-randomness (where $X\in\cs$ is weakly 2-random if it does not belong to any $\Pi^0_2$ nullset), while Demuth randomness is incomparable with it.

Our next theorem shows that the randomness notion yielded by the first approach above is indeed Demuth randomness.

%

\begin{theorem}\label{thm:demuth-list-diff}
$X \in \cs$ is Demuth random if and only if for every test $(\mathcal{D}_n)_{n\in\omega}$ where $\mathcal{D}_n$ is a finite union of differences of two open sets (presented as a finite list of indices for these sets), and such that $\mu(\mathcal{D}_n) \leq 2^{-n}$,~$X$ only belongs to finitely many sets $\mathcal{D}_n$. 
\end{theorem}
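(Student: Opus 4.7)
The plan is to prove both implications by conversion between tests. For $(\Leftarrow)$ I will express each Demuth test as a test of the theorem's form having the same levels, and for $(\Rightarrow)$ I will cover each given test levelwise by a Demuth test; the delicate issue is entirely in $(\Rightarrow)$ and concerns producing a computable mind-change bound.

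For $(\Leftarrow)$, start with a Demuth test $(\mathcal{W}_{g(n)})$, with $g$ $\omega$-c.a.\ via an approximation $g(n,s)$ whose mind-change count $\phi(n,s)$ is bounded by some computable function $h$. For each $n$ and each $j \leq h(n)$, I construct two c.e.\ open sets uniformly in $(n,j)$: $A_{n,j}$ remains empty until the first stage $s_j^n$ at which $\phi(n,\cdot)$ reaches $j$, then mimics the enumeration of $\mathcal{W}_{g(n,s_j^n)}$; while $B_{n,j}$ becomes all of $\cs$ as soon as $\phi(n,\cdot)$ exceeds $j$, and stays empty otherwise. Setting $\mathcal{D}_n := \bigcup_{j \leq h(n)}(A_{n,j} \setminus B_{n,j})$, a case analysis on $j^\ast := \lim_s \phi(n,s)$ shows every term is empty for $j \neq j^\ast$ (either $B_{n,j}=\cs$ when $j<j^\ast$, or $A_{n,j}$ is never activated when $j>j^\ast$), and the term at $j=j^\ast$ equals $\mathcal{W}_{g(n)}$; so $\mathcal{D}_n = \mathcal{W}_{g(n)}$ and $(\mathcal{D}_n)$ is a test of the stated form.

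For $(\Rightarrow)$, write $\mathcal{D}_n = \bigcup_{i \leq k_n}(U_{n,i}\setminus V_{n,i})$ and set $E_n^{(s)} := \bigcup_i (U_{n,i} \setminus V_{n,i}^s)$, which is effectively open uniformly in $(n,s)$ and decreases in $s$ with $\mu(E_n^{(s)}) \searrow \mu(\mathcal{D}_n) \leq 2^{-n}$. I construct a computable approximation $s(n,t)$: start with $s(n,0)=0$ and, at any stage $t$ where the stage-$t$ estimate $\mu^t(E_n^{(s(n,t-1))})$ exceeds $2\cdot 2^{-n}$, switch to the least $s' \in (s(n,t-1),t]$ for which $\mu^t(E_n^{(s')}) \leq \tfrac{3}{2}\cdot 2^{-n}$ (and otherwise leave $s(n,t)$ unchanged). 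The cushion between the trigger $2\cdot 2^{-n}$ and the target $\tfrac32\cdot 2^{-n}$ forces each non-initial phase to absorb at least $\tfrac12\cdot 2^{-n}$ of new mass in the $U_{n,i}^t$; since a given newly-enumerated string can contribute to the rise of only the phase in which it first appears, the total rise across phases is at most $\sum_i \mu(U_{n,i}) \leq k_n$. This yields a computable bound of $2k_n\cdot 2^n + 1$ on the number of mind changes, so the index $g(n)$ of $\mathcal{W}_{g(n)} := E_n^{(s(n,\infty))}$ is $\omega$-c.a.; and since $\mu(\mathcal{D}_n) < \tfrac32\cdot 2^{-n}$ ensures a suitable $s'$ eventually becomes available, the procedure stabilizes with $\mu(\mathcal{W}_{g(n)}) \leq 2\cdot 2^{-n}$ and $\mathcal{W}_{g(n)} \supseteq \mathcal{D}_n$. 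So $(\mathcal{W}_{g(n)})$ is a valid Demuth test (the bound $2\cdot 2^{-n}$ being summable and computable), and Demuth randomness of $X$ forces $X \in \mathcal{D}_n$ for only finitely many $n$.

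The crux is the mind-change bound in $(\Rightarrow)$. The most natural choice, $s(n,t) := $ least $s$ with $\mu^t(E_n^{(s)}) \leq 2\cdot 2^{-n}$, produces a $\Delta^0_2$ function whose mind-change count equals the $\emptyset'$-computable value $s_n^\ast$, which is in general not computably bounded; the buffer between the trigger and target thresholds is precisely what converts the construction into a measure-progress argument with a computable count.
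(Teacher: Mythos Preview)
Your proof is correct and follows the same overall architecture as the paper's: in both directions you convert tests levelwise, and in the harder direction $(\Rightarrow)$ you approximate $\mathcal{D}_n$ from above by the effectively open sets $E_n^{(s)}=\bigcup_i(U_{n,i}\setminus V_{n,i}^{s})$ and bound the number of times $s$ needs to be updated. Your $(\Leftarrow)$ is essentially identical to the paper's (your $B_{n,j}$ becoming all of $2^\omega$ plays the same role as the paper's $\mathcal{V}_k$ becoming equal to $\mathcal{U}_k$).

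The genuine difference is in the switching rule for $(\Rightarrow)$. The paper watches each $\mathcal{V}_k$ separately and triggers a new version whenever some $\mu(\mathcal{V}_k[s])$ crosses a new multiple of $2^{-n-1}/h(n)$; this gives a bound of $h(n)^2\cdot 2^{n+1}$ mind changes and keeps the final measure below $2^{-n}$ directly (via the shift $\mathcal{D}_{n+1}\mapsto\mathcal{W}_{g(n)}$). Your dual-threshold mechanism instead watches the aggregate $\mu^t(E_n^{(s)})$, switching only when it exceeds $2\cdot2^{-n}$ and choosing the new $s'$ so that the measure drops below $\tfrac32\cdot2^{-n}$; the buffer forces each completed phase to account for at least $\tfrac12\cdot 2^{-n}$ of fresh $U$-mass, and since the total such mass is $\sum_i\mu(U_{n,i})\le k_n$ you get a linear (in $k_n$) mind-change bound. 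This is a clean measure-progress argument; the price you pay is a final measure bound of $2\cdot 2^{-n}$ rather than $2^{-n}$, which, as you note, is harmless by the robustness of Demuth tests under computable summable bounds. Either trigger mechanism works, and your observation that the naive choice $s(n,t)=\min\{s:\mu^t(E_n^{(s)})\le 2\cdot 2^{-n}\}$ fails to be $\omega$-c.a.\ nicely isolates why a buffer is needed.
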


This is the analogue of a result of Franklin and Ng~\cite{FranklinN2014} who proved the same equivalence between this new type of tests and Demuth tests when the passing condition is ``$X$ does not belong to all levels", thus obtaining a new characterization of weak Demuth randomness. 

\begin{proof}
We first see how to turn a Demuth $(\mathcal{W}_{g(n)})_{n \in \omega}$ into a test $(\mathcal{D}_n)_{n \in \N}$ as in the theorem. Let~$h$ be computable bound on the number of changes of~$g$. For each~$n$, effectively create a list of $h(n)$ pairs of difference sets $\mathcal{U}_k \setminus\mathcal{V}_k$, where $\mathcal{U}_k$ is the $k$-th version of $\mathcal{W}_{g(n)}$ (that is, $\mathcal{U}_k$ is equal to $\mathcal{W}_{g(n,s+1)}$, where $s+1$ is the $k$-th stage at which $g(n,s+1)\not=g(n,s)$; if there is no such~$s$, $\mathcal{U}_k$ remains empty), and $\mathcal{V}_k$ is equal to $\mathcal{U}_k$ if a $(k+1)$-th version of $\mathcal{W}_{g(n)}$ ever appears, and empty otherwise. It is easy to see that all $\mathcal{U}_k \setminus\mathcal{V}_k$ are empty, except the one where $k$ corresponds to the final version of $\mathcal{W}_{g(n)}$, for which  we have $\mathcal{U}_k \setminus \mathcal{V}_k=\mathcal{W}_{g(n)}$. Calling $\mathcal{D}_n$ the finite union of the $\mathcal{U}_k \setminus \mathcal{V}_k$, we have $\mathcal{D}_n = \mathcal{W}_{g(n)}$. Thus~$X$ is in infinitely many $\mathcal{W}_{g(n)}$ if and only if it is in infinitely many $\mathcal{D}_n$. 

Conversely, let us see how to convert a test  $(\mathcal{D}_n)_{n \in \omega}$ as above into a Demuth test. For every $n$, the $n$-th level $\mathcal{W}_{g(n)}$ of the Demuth test is built as follows: consider the list of $\mathcal{U}_k \setminus  \mathcal{V}_k$ composing $\mathcal{D}_{n+1}$. There are $h(n)$ such difference sets, where $h$ is a computable function. The first version of $\mathcal{W}_{g(n)}$ is $\bigcup_{k} \mathcal{U}_k$. Meanwhile, enumerate all $\mathcal{V}_k$ in parallel. When we see at some stage~$s$ that the measure of one of the $\mathcal{V}_k$'s becomes greater than some new multiple of $2^{-n-1}/h(n)$, we change the version of $\mathcal{W}_{g(n)}$: The new version is now $\bigcup_{k} (\mathcal{U}_k \setminus \mathcal{V}_k[s])$. 

The number of versions of $\mathcal{W}_{g(n)}$ is bounded by $h(n)^2 \cdot 2^{n+1}$. Indeed, each $\mathcal{V}_k$ can reach a new multiple of $2^{-n-1}/h(n)$ only $h(n) \cdot 2^{n+1}$ times (causing a new version of $\mathcal{W}_{g(n)}$), and there are $h(n)$ sets $\mathcal{V}_k$'s. By definition, every version of $\mathcal{W}_{g(n)}$ contains $\mathcal{D}_{n+1}$, so any sequence contained in infinitely many $\mathcal{D}_{n}$'s is contained in infinitely many $\mathcal{W}_{g(n)}$. It remains to evaluate the measure of $\mathcal{W}_{g(n)}$. Let $s$ be the stage at which the final version of $\mathcal{W}_{g(n)}$ has appeared. Since this is the final version, this means that no $\mathcal{V}_k$ will increase by more than $2^{-n-1}/h(n)$ in measure after stage~$s$, thus for each $k$,
\[
\Big|\mu(\mathcal{U}_k \setminus \mathcal{V}_k[s]) - \mu(\mathcal{U}_k \setminus \mathcal{V}_k)\Big| \leq 2^{-n-1}/h(n).
\] 
Since there are $h(n)$ terms $\mathcal{U}_k \setminus \mathcal{V}_k$ in $\mathcal{D}_n$, we have
\[
\Big|\mu(\bigcup_k \mathcal{U}_k \setminus \mathcal{V}_k[s]) - \mu(\bigcup_k \mathcal{U}_k \setminus \mathcal{V}_k)\Big| \leq 2^{-n-1},
\]
i.e., 
\[
\Big|\mu(\mathcal{W}_{g(n)}) - \mu(\mathcal{D}_{n+1})\Big| \leq 2^{-n-1}.
\] 
As $\mu(\mathcal{D}_{n+1}) \leq 2^{-n-1}$, we get $\mu(\mathcal{W}_{g(n)}) \leq 2^{-n-1}+2^{-n-1}=2^{-n}$ as desired. 
\end{proof}

From this theorem and our analysis of the tests induced by fireworks arguments, we immediately get that Demuth randomness is sufficient to compute a $1$-generic. 

\begin{theorem}\label{thm:demuth-1-generic}
Every Demuth random computes a $1$-generic. 
\end{theorem}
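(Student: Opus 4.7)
The plan is to put together the pieces already assembled: the fireworks argument provides, for each $k$, a Turing functional $\Psi_k$ such that the failure set of $\Psi_k$ (i.e., the set of oracles $X$ for which $\Psi_k^X$ fails to be a $1$-generic) is captured by the sets $\mathcal{F}_{e,k} = \mathcal{U}_{e,k}\setminus\mathcal{V}_{e,k}$ described in Section 3.2. Theorem~\ref{thm:demuth-list-diff} then tells us that Demuth randomness is precisely the notion that handles Solovay-style passing for families whose levels are finite unions of differences of effectively open sets. The proof will just be assembling these ingredients.

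\textbf{First} I would re-index: set $\mathcal{F}'_i := \mathcal{F}_{e,k}$ whenever $i = \langle e,k\rangle$, using the choice $N(e,k)=2^{\langle e,k\rangle +1}$ (or whatever clean exponent gives us the required bound) so that $\mu(\mathcal{F}'_i) \le 2^{-i}$ uniformly in~$i$. Each $\mathcal{F}'_i$ is a difference of two effectively open sets (in particular, a finite union of such), and the indices for the two effectively open sets are obtained uniformly from~$i$ by reading off the construction of $\Psi_k$. Thus $(\mathcal{F}'_i)_{i\in\omega}$ is a test of the form considered in Theorem~\ref{thm:demuth-list-diff}.

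\textbf{Second}, apply Theorem~\ref{thm:demuth-list-diff} to this test: if $X$ is Demuth random, then $X$ belongs to only finitely many of the $\mathcal{F}'_i$, hence to only finitely many $\mathcal{F}_{e,k}$. Since this family is doubly indexed, there exists some $K$ such that for every $k\ge K$ and every $e\in\omega$, $X \notin \mathcal{F}_{e,k}$. Pick any such $k$ (e.g., $k=K$). For this~$k$, no $\mathcal{R}_e$-strategy inside $\Psi_k$ has outcome (iii) on oracle~$X$; consequently every strategy has outcome (i) or (ii), every requirement $\mathcal{R}_e$ is satisfied, and $\Psi_k^X$ is a total $1$-generic sequence. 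Hence $X$ computes a $1$-generic, namely $\Psi_k^X$.

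\textbf{Main obstacle.} Essentially all the substantive work has been done in Section 3.2 and in Theorem~\ref{thm:demuth-list-diff}; the only thing to verify carefully is that the identification of the failure set of the $\mathcal{R}_e$-strategy with $\mathcal{U}_{e,k}\setminus\mathcal{V}_{e,k}$ is uniform in $(e,k)$, so that indices for $\mathcal{U}_{e,k}$ and $\mathcal{V}_{e,k}$ are computable from $(e,k)$ --- this is clear from the construction of $\Psi_k$, where both ``the $\mathcal{R}_e$-strategy makes an active guess'' and ``the active guess is confirmed'' are c.e.\ events uniformly in $(e,k)$. Modulo this bookkeeping, the theorem follows directly.
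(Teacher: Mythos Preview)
Your proposal is correct and follows exactly the approach of the paper: the paper's own proof of this theorem is a single sentence stating that the result follows immediately from Theorem~\ref{thm:demuth-list-diff} together with the analysis of the fireworks failure sets in Section~3.2, and you have simply spelled out that deduction in detail (re-indexing the $\mathcal{F}_{e,k}$ into a single sequence $(\mathcal{F}'_i)$, applying Theorem~\ref{thm:demuth-list-diff}, and extracting a $k$ for which no strategy has outcome~(iii)). Your bookkeeping observation about the uniformity of the indices for $\mathcal{U}_{e,k}$ and $\mathcal{V}_{e,k}$ is exactly the point the paper takes for granted.
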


Theorem~\ref{thm:demuth-list-diff} more generally tells us that if $\mathcal{S} \subseteq \cs$ is such that a member of $\mathcal{S}$ can be obtained with positive probability via a fireworks argument, one can conclude that every Demuth random~$X$ computes some element of~$\mathcal{S}$. For example, more intricate fireworks arguments were used in~\cite{BienvenuP2017} for the set $\mathcal{S}$ consisting of the diagonally non-computable (DNC) functions which compute no Martin-L\"of random. Together with the present paper, we have established that every Demuth random~$X$ computes a DNC function which computes no Martin-L\"of random.

This theorem cannot be generalized to currently available definitions of randomness (other than those that imply Demuth randomness): any notion implied by weak 2-randomness, which includes weak Demuth randomness, difference randomness, Martin-L\"of randomness, Oberwolfach randomness, balanced randomness, etc., do not guarantee the computation of a 1-generic. Indeed, every $1$-generic is hyperimmune while there are sequences that are both weakly 2-random and of hyperimmune-free Turing degree~\cite[Proposition 3.6.4]{Nies2009}. 

One of the main results of~\cite{BarmpaliasDL2013} is that every non-computable $X$ which is merely \emph{Turing below} a $2$-random~$Y$ computes a $1$-generic (Kurtz~\cite{Kurtz1981} had proven this result for almost all~$Y$, but the exact level of randomness needed was unknown). One cannot replace $2$-randomness by Demuth randomness in this statement. Indeed, take a~$Y$ which is Demuth random but not weakly 2-random. By a result of Hirschfeldt and Miller, $Y$ computes a non-computable c.e.\ set~$A$ (see~\cite[Corollary 7.2.12]{DowneyH2010}). Applying a second result, due to Yates, that every non-computable c.e.\ set computes some~$X$ of minimal degree, it follows that a sequence~$X$ that is below a Demuth random does not compute any $1$-generic, as no $1$-generic has minimal degree. 

\subsection{Demuth randomness vs stronger genericity notions}

So far we have seen that every Demuth random computes a $1$-generic sequence, and that this was in some sense optimal among the randomness notions that have been considered in the litterature. One may wonder whether the same is true of the genericity notion; that is, can we improve Theorem~\ref{thm:demuth-1-generic} by replacing 1-genericity with a stronger genericity notion? Again, we give a negative answer for the genericity notions we are aware of between $1$-genericity and $2$-genericity.  We have seen in the introduction that a good candidate for a possible strengthening of Theorem~\ref{thm:demuth-1-generic} would be weak 2-genericity. We show that at this level of genericity, the situation changes significantly: a Demuth random sequence and a weakly 2-generic sequence always form a minimal pair. In fact, we will show this even for a weaker notion, known as pb-genericity, introduced by Downey, Jockusch and Stob~\cite{DowneyJS1996}.

\begin{definition}
$G \in \cs$ is pb-generic if for every function~$f: \str \rightarrow \str$ such that 
\begin{itemize}
\item[(i)] $f$ is computable in~$\emptyset'$ with a primitive recursive bound on the use, and
\item[(ii)] $\sigma \prefx f(\sigma)$ for all~$\sigma$,
\end{itemize}
there are infinitely many~$n$ such that $f(G \uh n) \prefx G$. 
\end{definition}

It is easy to see that weak 2-genericity implies pb-genericity. Indeed, for each $n$, consider the set of strings $S_n=\{f(\sigma) \mid |\sigma| \geq n\}$. By definition of~$f$, $S_n$ is both dense and $\emptyset'$-c.e., thus a weakly 2-generic sequence~$G$ must have a prefix in every $S_n$, which is exactly what it means for $G$ to be pb-generic. 

It is already known that a Demuth random cannot compute a pb-generic: indeed, Downey et al.~\cite{DowneyJS1996} proved that  $X \in \cs$ computes a pb-generic if and only if it has array non-computable degree, meaning that $X$ can compute a total function $f: \N \rightarrow \N$ which is dominated by no $\omega$-c.a.\ function $g$ (where $g$ is said to dominate $f$ if $f(n) \leq g(n)$ for almost all~$n$), and Downey and Ng~\cite{DowneyN2009} showed that all Demuth randoms have array computable degree. The next theorem improves this. 

\begin{theorem}
If $X$ is Demuth random and~$G$ is pb-generic, then $X$ and $G$ form a minimal pair in the Turing degrees. 
\end{theorem}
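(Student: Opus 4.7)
The plan is to argue by contradiction. Assume that $Z := \Phi^X = \Psi^G$ is non-computable; I will use the pb-genericity of $G$ to cover $X$ by a test of the type characterized by Theorem~\ref{thm:demuth-list-diff}, contradicting the Demuth randomness of $X$.

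For each $\sigma \in \str$, let $\Psi(\sigma)$ denote the longest initial segment of $\Psi^\sigma(0)\Psi^\sigma(1)\cdots$ whose entries all converge with use at most $|\sigma|$, and set $\A_\sigma := \{Y \in \cs : \Psi(\sigma) \prefx \Phi^Y\}$, which is uniformly effectively open in $\sigma$. Two observations are central. First, whenever $\sigma \prefx G$, $\Psi(\sigma) \prefx Z$ and hence $X \in \A_\sigma$. Second, since $Z$ is non-computable, the set $\{Y : Z \leq_T Y\}$ has measure $0$ by Sacks' theorem, so $\mu(\A_\sigma) \to 0$ along $\sigma \prefx G$.

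I then construct a function $f : \str \to \str$ which is $\emptyset'$-computable with primitive recursive oracle use. On input $\sigma$ of length $n$, $f$ searches using $\emptyset'$ for the lex-least $\tau \exts \sigma$ of length at most $p(n)$ --- where $p$ is a primitive recursive function chosen later --- with $\mu(\A_\tau) \leq 2^{-n}$, returning this $\tau$ if found and $\sigma$ otherwise. The condition $\mu(\A_\tau) \leq 2^{-n}$ is $\Pi^0_1$ in $\tau$ (since $\mu(\A_\tau)$ is lower semicomputable), each such check uses a bounded number of bits of $\emptyset'$, and the total use is primitive recursive in $n$ because the search space has primitive-recursive size. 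By the pb-genericity of $G$, there are infinitely many $n$ with $f(G\uh n) \prefx G$; for those $n$ on which the search additionally succeeds, we obtain $\tau_n \prefx G$ with $\mu(\A_{\tau_n}) \leq 2^{-n}$ and $X \in \A_{\tau_n}$.

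To repackage this as a test in the sense of Theorem~\ref{thm:demuth-list-diff}, for each $n$ I approximate the current lex-least candidate $\tau[n,s]$ stage-by-stage: any fixed candidate $\tau$ of length $\leq p(n)$ can leave the candidate pool at most once (precisely when the stage-$s$ approximation of $\mu(\A_\tau)$ first rises above $2^{-n}$), so the number of mind-changes in the approximation is bounded by $2^{p(n)+1}$, which is primitive recursive in $n$. Collecting the corresponding $\A_{\tau[n,s]}$'s as a finite union of differences of effectively open sets, in the same bookkeeping style as the proof of Theorem~\ref{thm:demuth-list-diff}, yields a level $\D_n$ with $\mu(\D_n) \leq 2^{-n}$. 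The resulting test $(\D_n)_{n\in\omega}$ then captures $X$ in infinitely many levels, contradicting Theorem~\ref{thm:demuth-list-diff} applied to the Demuth random $X$.

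The hardest step will be to ensure that the search in the definition of $f$ succeeds on infinitely many prefixes of $G$ --- equivalently, that $\mu(\A_{G\uh n})$ decreases fast enough along $G$ to fall below $2^{-n}$ within primitive-recursive depth $p(n)$. The natural obstruction is that $\mu(\A_{G\uh n})$ could in principle decrease extremely slowly along $G$; such slow decrease should however be ruled out by a second application of pb-genericity, to an auxiliary function that witnesses the slow decrease and forces $G$ through deeper prefixes of ever smaller measure. Choosing $p$ appropriately --- perhaps via an iterated pb-genericity argument with a family of auxiliary functions $(f_k)_k$ forcing progressively smaller measure along $G$ --- is the technical heart of the proof.
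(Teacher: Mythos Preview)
Your overall strategy---use the pb-genericity of $G$ to define a function $f$ whose action on prefixes of $G$ traps $X$ inside small open sets, then package these into a Demuth-type test via Theorem~\ref{thm:demuth-list-diff}---is exactly right and is what the paper does. But the step you flag as ``the hardest'' is a genuine gap, and the fix you sketch does not work. Your $f$ performs a search bounded by $p(n)$, and you need the search to succeed infinitely often along $G$; this amounts to asking that $\mu(\A_{G\uh m})$ drop below $2^{-n}$ for some $m\le p(n)$. Nothing controls the rate at which $\mu(\A_{G\uh m})\to 0$: it depends on the use of $\Psi$ along $G$, which can be arbitrarily slow. Your proposed ``iterated pb-genericity'' with auxiliary functions $f_k$ runs into the same wall: each $f_k$ would itself need a bounded search to have primitive-recursive $\emptyset'$-use, and you are back to needing an a priori bound you do not have. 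There is also a secondary indexing problem: as written your test is indexed by $n=|\sigma|$, but one level per length gives no way to cover all $\sigma$ of that length while keeping measure $\le 2^{-n}$; you must index by $\Nat(\sigma)$ and use the bound $2^{-\Nat(\sigma)}$, as the paper does.

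The paper's proof avoids the bounded-search problem entirely by replacing your direct measure condition with a pigeonhole argument. For $\sigma$ with $N=\Nat(\sigma)$, it searches (with no length bound) for $2^N$ extensions $\sigma_1,\dots,\sigma_{2^N}$ of $\sigma$ whose $\Psi$-outputs $\tau_1,\dots,\tau_{2^N}$ are pairwise incomparable. This search is merely $\Sigma^0_1$, so finding the family costs nothing in $\emptyset'$-use; once the family is fixed, the sets $\Phi^{-1}([\tau_i])$ are disjoint, so at least one has measure $\le 2^{-N}$, and maintaining the least such index $j$ costs at most $2^N$ mind changes---a primitive-recursive bound. One then sets $f(\sigma)=\sigma_j$ and takes $\mathcal{W}_{g(N)}=\Phi^{-1}([\tau_j])$. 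The case where no such family exists is handled by a dichotomy you are missing: if the tree of $\Psi$-outputs above $\sigma$ has fewer than $2^N$ incomparable nodes, every infinite path through it is isolated and hence computable, so $\Psi^G$ is computable. This dichotomy is precisely what replaces your unproven assertion that the search succeeds.
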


This also strengthens the theorem of Nies et al.'s mentioned in the introduction which asserts that for any pair $(X,G)$ consisting  of a $2$-random and a $2$-generic forms a minimal pair. As we shall see later, among the available effective notions of randomness and genericity in the literature (that we are aware of), this theorem is the best we can get. 

\begin{proof}
For this proof, we fix a primitive recursive bijection $\Nat$ from strings to integers. Let us consider a pair $(\Phi, \Psi)$ of Turing functionals.  We want to show that if $\Phi^G$ and $\Psi^X$ are both defined and equal, then they are computable. For each such pair of functionals, we exploit the pb-genericity of~$G$ by building  a specific function~$f$, and exploit the Demuth randomness of~$X$, by building a Demuth test $(\W_{g(n)})_{n \in \N}$. The function~$f$ is defined as follows. For a given~$\sigma\in\str$, look for a family of pairs of strings $(\sigma_1,\tau_1), ..., (\sigma_{2^N},\tau_{2^N})$, where $N=\Nat(\sigma)$, such that:
\begin{itemize}
\item all $\sigma_i$ strictly extend $\sigma$,
\item for all~$i$, $\Phi^{\sigma_i} \exts \tau_i$,
\item and the $\tau_i$ are pairwise incomparable. 
\end{itemize}

Note that these conditions are c.e.\ and therefore if there is such a family, it can be effectively found. If such a family is eventually found, let $f(\sigma)=\sigma_j$ where $j$ is the smallest index such that the measure of $\Psi^{-1}(\tau_j)$ (that is, the effectively open set $\{X \in \cs \mid \Psi^X \exts \tau_j\}$) is smaller or equal to $2^{-N}$ (note that since the $\tau_i$ are mutually incomparable, the open sets $\Psi^{-1}([\tau_i])$ are disjoint, hence at least one of them must have measure at most $2^{-N}$) and define $\W_{g(N)}$ to be equal to $\Psi^{-1}([\tau_j])$. If there is no such family of pairs of strings, set $f(\sigma)=\sigma$ 	and $\W_{g(N)}$ to be the empty set. We show that this construction works via a series of claims. \\

\begin{claim}
The function~$f$ is computable in $\emptyset'$ with a primitive recursive bound on the use. 
\end{claim}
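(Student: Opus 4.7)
The plan is to exhibit a Turing functional $\Gamma$ and a primitive recursive function $p$ such that $f = \Gamma^{\emptyset'}$ and the oracle use of $\Gamma^{\emptyset'}(\sigma)$ is bounded by $p(\sigma)$ for every $\sigma$. All the predicates appearing in the definition of $f(\sigma)$ --- the existence of a suitable family $(\sigma_i,\tau_i)_{i \leq 2^{\Nat(\sigma)}}$, and the measure conditions on the $\Psi^{-1}([\tau_j])$ --- are $\Sigma_1$ or $\Pi_1$, so $f$ is certainly $\emptyset'$-computable. What I expect to be the main obstacle is that the $\tau_j$'s are not themselves primitive recursive in $\sigma$ (they may only appear in the c.e.\ enumeration after arbitrarily many stages), so the naive query ``is $\mu(\Psi^{-1}([\tau_j])) > 2^{-\Nat(\sigma)}$?'' would produce an index depending on $\tau_j$, not primitive recursively in $\sigma$.

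To bypass this, the first step is to fix once and for all a uniformly computable search procedure which, on input $\sigma$, enumerates candidate families in a fixed primitive recursive order and halts as soon as one satisfying the three conditions is found; call the result $F(\sigma)$ (undefined otherwise). Then the predicate ``$F(\sigma)$ halts'' is $\Sigma_1$, with an index $e_0(\sigma)$ primitive recursive in $\sigma$. Next, for each $i \leq 2^{\Nat(\sigma)}$, I define the $\Sigma_1$ predicate $R_i(\sigma)$ asserting that there exist stages $s \leq t$ such that $F$ halts at stage $s$ with $i$-th pair $(\sigma_i^{(s)},\tau_i^{(s)})$ and by stage $t$ the lower approximation of $\mu(\Psi^{-1}([\tau_i^{(s)}]))$ exceeds $2^{-\Nat(\sigma)}$. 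This $R_i(\sigma)$ has an index $e_i(\sigma)$ primitive recursive in $(\sigma,i)$; the key point is that the stage $s$ internally produces the relevant $\tau_i^{(s)}$, so $\tau_j$ never has to appear as an external parameter.

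The functional $\Gamma^{\emptyset'}(\sigma)$ then proceeds as follows: query $\emptyset'$ on $e_0(\sigma)$; if the answer is negative, output $\sigma$. Otherwise, query $\emptyset'$ on each $e_i(\sigma)$ for $i=1,\ldots,2^{\Nat(\sigma)}$, let $j$ be the least index where the answer is negative (such a $j$ exists, since the $2^{\Nat(\sigma)}$ sets $\Psi^{-1}([\tau_i])$ are pairwise disjoint and of total measure at most $1$), then run the canonical search to completion --- guaranteed to terminate, since $e_0(\sigma)\in\emptyset'$ --- and output the $j$-th $\sigma$-coordinate of $F(\sigma)$. All oracle queries lie in the set $\{e_0(\sigma), e_1(\sigma), \ldots, e_{2^{\Nat(\sigma)}}(\sigma)\}$, so $p(\sigma) := 1 + \max_{i \leq 2^{\Nat(\sigma)}} e_i(\sigma)$ is a primitive recursive upper bound on the use, as required. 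Once the queries have been packaged in this stage-parameterised way, the remaining bookkeeping (checking that $e_i$ is genuinely primitive recursive in $(\sigma,i)$ via the $s$-$m$-$n$ theorem, and that $2^{\Nat(\sigma)}$ is primitive recursive in $\sigma$) is routine.
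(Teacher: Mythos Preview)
Your proof is correct, but it takes a different route from the paper's. The paper does not directly bound the oracle use; instead it exhibits a computable approximation $f(\sigma)[s]$ (equal to $\sigma$ until a family is found, then equal to $\sigma_k$ for the least~$k$ with $\mu(\Psi^{-1}(\tau_k)[s]) \leq 2^{-N}$) and counts the number of mind changes: one when the family is first found, and at most $2^{N}-1$ more as successive candidates $\tau_k$ are discovered to have $\mu(\Psi^{-1}(\tau_k)) > 2^{-N}$, giving $2^{\Nat(\sigma)}$ changes in total. It then invokes (implicitly) the standard equivalence that a function is $\emptyset'$-computable with primitive recursive use iff it has a computable approximation with a primitive recursively bounded number of mind changes.

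Your approach bypasses that equivalence by packaging the relevant $\Sigma_1$ questions (existence of the family; whether $\mu(\Psi^{-1}(\tau_i))>2^{-N}$, with the discovery stage~$s$ of the family folded into the predicate so that $\tau_i$ need not be an external parameter) into indices $e_0(\sigma),\ldots,e_{2^{\Nat(\sigma)}}(\sigma)$ that are primitive recursive in~$\sigma$ via $s$-$m$-$n$, then bounding the use by their maximum. This is more explicit and self-contained; the paper's argument is shorter but leans on the mind-change characterization. Both are fine.
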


\begin{proof}
Indeed $f$ has a computable approximation with a primitive recursive bound on the number of mind changes. For a given $\sigma$ and stage~$s$, let $f(\sigma)[s]$ be equal to $\sigma$ if no family of pairs as in the construction has been found by stage~$s$ and in case such a family was found before stage~$s$, set $f(\sigma)[s]$ be equal to $\sigma_k$ where $k$ is the minimal such that the measure of $\Psi^{-1}(\tau_k)[s]$ is smaller or equal to $2^{-N}$ (where, again, $N=\Nat(\sigma)$). Note that this is indeed a computable approximation, and $f(\sigma)[t]$ can change at most $2^{N}$ times. Indeed, it changes once when (and if) the family of pairs is found, and changes every time the current candidate $\tau_k$ is discovered to be such that the measure of $\Psi^{-1}(\tau_k)[s]$ is bigger than $2^{-N}$, which can only happen to $2^{N}-1$ strings. Moreover, $\Nat$ is a primitive recursive function, thus so is $\sigma \mapsto 2^{\Nat(\sigma)}$. 
\end{proof}

\begin{claim}
The sequence $(\mathcal{W}_{g(N)})_{N \in \N}$ is a Demuth test. 
\end{claim}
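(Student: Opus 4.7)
The plan is to verify directly the two defining conditions of a Demuth test: that $\mu(\mathcal{W}_{g(N)}) \leq 2^{-N}$ for every $N$, and that $g$ is an $\omega$-c.a.\ function with a computable bound on its number of mind changes.

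The measure bound is essentially built into the construction. If no witnessing family is ever found for $\sigma = \Nat^{-1}(N)$, then $\mathcal{W}_{g(N)} = \emptyset$ and there is nothing to check. Otherwise, $\mathcal{W}_{g(N)} = \Psi^{-1}([\tau_j])$ where $j$ was chosen precisely so that $\mu(\Psi^{-1}([\tau_j])) \leq 2^{-N}$. One should briefly note that such a $j$ always exists in $\{1,\dots,2^N\}$: because the $\tau_i$ are pairwise incomparable, the open sets $\Psi^{-1}([\tau_i])$ are pairwise disjoint, so their measures sum to at most $1$, and by pigeonhole at least one of the $2^N$ summands is bounded by $2^{-N}$.

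For the $\omega$-c.a.\ property, I would build a computable approximation $g(N,s)$ in direct parallel with the approximation $f(\sigma)[s]$ from the previous claim. Fix effective indices for $\emptyset$ and for each stage-$s$ approximation $\Psi^{-1}([\tau_k])[s]$ of $\Psi^{-1}([\tau_k])$. Set $g(N,s)$ to a fixed index for $\emptyset$ until a witnessing family is enumerated; thereafter, set $g(N,s)$ to an index for $\Psi^{-1}([\tau_k])[s]$, where $k$ is the least element of $\{1,\dots,2^N\}$ with $\mu(\Psi^{-1}([\tau_k])[s]) \leq 2^{-N}$. Since stage-$s$ measures are monotone nondecreasing in $s$, once a candidate $k$ is rejected (its measure approximation crosses $2^{-N}$) it is never revisited, so the successive values taken by $g(N,s)$ after the initial empty value form a strictly increasing sequence of indices in $\{1,\dots,2^N\}$. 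Therefore $g(N,\cdot)$ changes at most $2^N$ times, giving the primitive recursive mind-change bound $h(N) = 2^N$. This shows $g \leq_{wtt} \emptyset'$, completing the verification.

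There is no serious obstacle here; the argument is essentially a bookkeeping exercise. The one point needing a little care is the bookkeeping itself: the single transition from ``empty'' to the first nonempty candidate must be counted together with the at most $2^N - 1$ later re-selections, and one must observe explicitly that monotonicity of the effectively open approximations prevents the index from oscillating.
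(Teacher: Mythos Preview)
Your argument is correct and matches the paper's proof almost verbatim: verify $\mu(\mathcal{W}_{g(N)})\le 2^{-N}$ directly from the choice of $j$, then bound the mind changes of the approximation by $2^N$ using that the candidate $k$ can only move forward through $\{1,\dots,2^N\}$. One small wording fix: at stage $s$ you should let $g(N,s)$ be an index for the full effectively open set $\Psi^{-1}([\tau_k])$ (computed uniformly from $k$), not for its stage-$s$ approximation $\Psi^{-1}([\tau_k])[s]$---otherwise the index would drift at every stage even with $k$ fixed---and it is the value $k$, not $g(N,s)$, that lies in $\{1,\dots,2^N\}$.
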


\begin{proof}
For a given~$N$, let $\sigma$ be the string such that $N=\Nat(\sigma)$. By definition of $\W_{g(N)}$, we have $\mu(\W_{g(N)}) \leq 2^{-N}$. Moreover, $g$ has a computable approximation with at most $2^N$-many changes, the proof of this being almost identical to that of the previous claim. Initially, and at any stage~$s$ before the desired family of strings is found, $\W_{g(N)[s]}$ is empty, and at any stage~$s$ posterior to finding such a family, one can take $\W_{g(N)[s]}=\Psi^{-1}(\tau_k)[s]$, where~$k$ is minimal such that $\mu(\Psi^{-1}(\tau_k)[s])\leq2^{-N}$. It follows that $g(N)$ can change at most $2^{N}$ many times. 
\end{proof}

\begin{claim}
If there are infinitely many~$n$ such that $f(G \uh n) \prefx G$ and if $X$ passes the Demuth test $(\mathcal{W}_{g(N)})_{N \in \N}$, then either $\Phi^G$ is partial, or $\Phi^G$ is computable, or $\Phi^G \not= \Psi^X$. 
\end{claim}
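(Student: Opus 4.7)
The plan is to prove the contrapositive: assume $\Phi^G$ is total, $\Phi^G = \Psi^X$, and $\Phi^G$ is not computable, aiming for a contradiction with one of the two hypotheses. First, I partition the prefixes $\sigma = G\uh n$ of $G$ satisfying $f(\sigma) \prec G$ into Case A (the family of $2^N$ pairs $(\sigma_i,\tau_i)$ was found in the construction of $f$, so $f(\sigma) = \sigma_j$ strictly extends $\sigma$) and Case B (no such family exists, so $f(\sigma) = \sigma$). In Case A, the condition $f(\sigma) \prec G$ amounts to $\sigma_j \prec G$; by monotonicity of $\Phi$, $\Phi^G \succeq \Phi^{\sigma_j} \succeq \tau_j$, and combined with $\Phi^G = \Psi^X$ we get $\Psi^X \succeq \tau_j$, i.e., $X \in \Psi^{-1}([\tau_j]) = \W_{g(\Nat(\sigma))}$. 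Since $\Nat$ is a bijection between strings and integers, distinct $\sigma$'s yield distinct test levels, so $X$ passing the Demuth test forces Case A with $f(\sigma) \prec G$ to hold at only finitely many prefixes.

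Combined with the hypothesis that $f(G\uh n) \prec G$ for infinitely many $n$, Case B must therefore occur at $\sigma = G\uh n$ for infinitely many $n$. The key observation is that Case B at $\sigma$ is precisely the statement that the c.e. prefix-closed tree $T_\sigma := \{\tau \in \str : \exists\, \sigma' \succ \sigma,\ \Phi^{\sigma'} \succeq \tau\}$ admits no antichain of size $2^{\Nat(\sigma)}$, i.e., $T_\sigma$ has bounded width, with the computable upper bound $w \leq 2^{\Nat(\sigma)}-1$. Because $\Phi^G$ is total and $G \succ \sigma$, all prefixes of $\Phi^G$ lie in $T_\sigma$, so $\Phi^G$ is an infinite path of this bounded-width c.e. tree.

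The crucial step, and the main obstacle, is to show that every infinite path of a c.e. prefix-closed tree of bounded width $w$ is computable. The argument proceeds by noting that $T_\sigma$ has at most $w$ elements at each level (any level is itself an antichain) and at most $w$ infinite paths; for a fixed infinite path $P$ of $T_\sigma$, using finite advice specifying $P$'s lex-rank among the infinite paths (at most $\log w$ bits, which can be hard-coded into the Turing machine computing $P$), one computes $P$ level-by-level by enumerating $T_\sigma$, identifying the ``live'' elements at each level via the width bound (once $w$ elements at a given level have been seen, no further elements at that level can appear, and a depth-extension check then separates live branches from dead ones), and selecting $P$'s prefix according to its rank. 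Applying this tree lemma to $T_\sigma$, the infinite path $\Phi^G$ is computable, contradicting the starting assumption; this contradiction establishes the claim, so at least one of $\Phi^G$ being partial, $\Phi^G$ being computable, or $\Phi^G \ne \Psi^X$ must hold.
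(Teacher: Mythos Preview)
Your overall strategy matches the paper's: split on whether the family of $2^N$ pairs was found (your Case~A, the paper's Case~2) or not (your Case~B, the paper's Case~1); your treatment of Case~A is correct. You also correctly isolate the key lemma needed for Case~B, namely that every infinite path of a c.e.\ prefix-closed tree with bounded antichain is computable.

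The gap is in your proof of that lemma. Your algorithm waits at each level until $w$ nodes have appeared, but the tree may have strictly fewer than $w$ nodes at a given level (the antichain bound is only an upper bound), in which case the wait never terminates. Moreover, the ``depth-extension check'' cannot separate live from dead branches in a c.e.\ tree: a dead branch may be arbitrarily deep. For instance, $T=\{0^n:n\geq 0\}\cup\{10^n:n\leq K\}$ has antichain bound~$2$, a unique infinite path $0^\omega$, and a dead branch below $1$ of depth $K+1$; no finite-stage check at level~$1$ distinguishes the live node $0$ from the dead node $1$ uniformly in~$K$. So the lex-rank alone is not enough advice to run the procedure you describe.

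The paper proves the same lemma by a different route. It shows that every infinite path $P$ is \emph{strongly isolated}: there is $n_0$ such that for all $n\geq n_0$, $P\uh(n+1)$ is the unique length-$(n+1)$ extension of $P\uh n$ in $T$ (otherwise the nodes $(P\uh n)^\frown(1-P(n))$ for infinitely many $n$ would be pairwise incomparable, contradicting the antichain bound). Taking the finite prefix $P\uh n_0$ as the hard-coded advice, one then computes each further bit by enumerating $T$ until some child of the current prefix appears; by uniqueness that child must be the correct one. This avoids any need to determine which other branches are live.
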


\begin{proof}
By definition of `passing a Demuth test', we know that $X$ only belongs to finitely many~$\mathcal{W}_{g(n)}$. Therefore let us take $n$ such that $f(G \uh n) \prefx G$ and large enough so that $X \notin \mathcal{W}_{g(N)}$, where $N=\Nat(G \uh n)$. Let $\sigma=G \uh n$. We distinguish two cases.\\

\noindent Case 1: $f(\sigma)=\sigma$. By definition of~$f$, this means that no family of pairs was ever found for that~$\sigma$, meaning that the set of strings
\[
T= \{\tau \mid (\exists \sigma' \exts \sigma)\, \Phi^{\sigma'} \exts \tau \}
\] 
contains at most $2^{N}-1$ incomparable strings. Observe that~$T$ is a c.e.\ tree (the computable enumerability is obvious by definition, and it is clearly closed under the prefix relation). Therefore, all infinite paths of~$T$ are strongly isolated, in the sense that for every infinite path~$X$, there is an $n_0$ such that for all ~$n \geq n_0$, $X \uh (n+1)$ is the only extension of~$X \uh n$ in~$T$. Indeed, if this were not the case for some path~$X$, we would have infinitely many $n$ such that $\tau_n = (X \uh n) ^\frown (1-X(n))$ is in the tree. Since for any $n \neq m$ $\tau_n$ and $\tau_m$ are incomparable, this would contradict our assumption that there is no family of $2^N$ incomparable strings in the tree. Of course, any strongly isolated path in a c.e.\ tree is computable since for almost all~$n$, $X(n)$ can be effectively found from $X \uh n$. Now observe that since $\sigma$ is a prefix of~$G$, $\Phi^G$, if it is defined, is a path of~$T$. Thus $\Phi^G$ is either undefined or is a computable sequence. \\

\noindent Case 2: $f(\sigma)$ strictly extends $\sigma$. By construction, this means that there is a string~$\tau$ such that $\Phi^G \exts \Phi^{f(\sigma)} \exts \tau$ and $\W_{g(N)} = \Psi^{-1}(\tau)$. Since by assumption $X \notin \W_{g(N)}$, this means that $\Psi^X \nsucceq \tau$, and thus $\Phi^G \not= \Psi^X$. 
\end{proof}

This last claim completes the proof. 
\end{proof}

\section{Weak 2-randomness vs genericity}

The other main randomness notion below $2$-randomness, namely weak 2-randomness, behaves quite differently from Demuth randomness in terms of the ``escaping power" of the functions $f:\omega\rightarrow\omega$ that such random elements can compute. Following the terminology of~\cite{AndrewsGM2014}, given $\mathscr{F}$ a countable family of functions, we say that a function~$g$ is  $\mathscr{F}$-escaping if it is not dominated by any function $f \in \mathscr{F}$. We will also say that $X \in \cs$ has $\mathscr{F}$-escaping degree if it computes an $\mathscr{F}$-escaping function. For example, $X$ has $\Delta^0_1$-escaping degree iff it has hyperimmune degree and $X$ has $(\omega$-c.a.)-escaping degree iff it has array non-computable degree. 

For Demuth random sequences we have an upper and a lower bound on escaping power: every Demuth random sequence has $\Delta^0_1$-escaping degree (see~\cite{Nies2009}; this also follows from the fact that every Demuth random computes a $1$-generic), but no Demuth random is $(\omega$-c.a.)-escaping as mentioned in the previous section (a fortiori, no Demuth random is $\Delta^0_2$-escaping). 

By contrast, weak 2-randomess is completely orthogonal to this measure of computational strength. On the one hand, some weakly 2-random sequences have hyperimmune-free degree (see for example~\cite{Nies2009}). On the other hand, a striking result by Barmpalias, Downey and Ng~\cite{BarmpaliasDN2011} is that for \emph{any} countable family $\mathscr{F}$ of functions, there is a weakly 2-random~$X$ that has has $\mathscr{F}$-escaping degree. 

There are close connections between escaping degrees and the ability to compute generics: 
\begin{itemize}
\item $X$ computes a weak-1-generic iff $X$ has $\Delta^0_1$-escaping degree~\cite{Kurtz1981}, 
\item $X$ computes a pb-generic iff it has $(\omega$-c.a.)-escaping degree~\cite{DowneyJS1996},
\item $X$ computes a weak-2-generic iff it has $\Delta^0_2$-escaping degree~\cite{AndrewsGM2014}, 
\item If $X$ has $\Delta^0_3$-escaping degree, it computes a $2$-generic~\cite{AndrewsGM2014}.
\end{itemize}

Together with the theorem of Barmpalias, Downey and Ng, the last item shows that there exists a weak-2-random which computes a $2$-generic. However, a very interesting result from~\cite{AndrewsGM2014} is that we cannot extend this correspondence much further in the genericity hierarchy: indeed, for \emph{any} countable family  $\mathscr{F}$ of functions, there exists an~$\mathscr{F}$-escaping function~$g$ which computes no weakly 3-generic sequence. Thus the theorem of Barmpalias, Downey and Ng does not say how weak 2-randomness interacts with weak 3-genericity or higher genericity notions. Our next theorem strengthens their result to show that in fact, there always exists a weakly 2-random sequence that computes a generic sequence, no matter how strong the genericity notion is.

\begin{theorem}\label{thm:w2r-may-compute-generic}
Let $\mathcal{G}$ be a comeager subset of $\cs$. There exists a weakly 2-random~$X$ that computes some $Y \in \mathcal{G}$. 
\end{theorem}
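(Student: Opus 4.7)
The plan is to prove this by a forcing construction. WLOG $\mathcal{G}$ contains a dense $G_\delta$ subset, so write $\mathcal{G}=\bigcap_n V_n$ with each $V_n$ open and dense in $\cs$ and $V_{n+1}\subseteq V_n$. Enumerate the $\Pi^0_2$ subsets of $\cs$ as $(N_e)_{e\in\omega}$, written $N_e=\bigcap_k U_{e,k}$ with $(U_{e,k})_k$ uniformly open and decreasing, so that $\mu(U_{e,k})\to 0$ whenever $\mu(N_e)=0$. Fix once and for all the computable Turing functional $\Phi$ defined by $\Phi^X(n):=X(2n)$, so $\Phi^X$ is the subsequence of even-indexed bits of $X$. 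Since $\Phi$ is computable, it suffices to find a weakly 2-random $X$ with $\Phi^X\in\mathcal{G}$; then $Y:=\Phi^X\le_T X$ is the required member of $\mathcal{G}$.

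I will build by induction a decreasing sequence $\mathcal{C}_0\supseteq\mathcal{C}_1\supseteq\cdots$ of closed subsets of $\cs$ of positive Lebesgue measure, together with an increasing sequence of strings $\sigma_n$ with $[\sigma_n]\subseteq V_n$, such that $\mathcal{C}_n\subseteq\Phi^{-1}([\sigma_n])$ and $\mathcal{C}_n\cap N_n=\emptyset$ whenever $\mu(N_n)=0$. Identifying $\cs$ with $\cs_{\mathrm{even}}\times\cs_{\mathrm{odd}}$ via the even/odd bit decomposition and writing $Y\otimes Z$ for the corresponding interleaving, let $\tilde{A}_n$ denote the set of $Y\succeq\sigma_n$ whose fiber $\{Z:Y\otimes Z\in\mathcal{C}_n\}$ has positive measure. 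The inductive step at stage $n+1$ consists of: (a) finding $\sigma_{n+1}\succeq\sigma_n$ with $[\sigma_{n+1}]\subseteq V_{n+1}$ such that $\tilde{A}_n\cap[\sigma_{n+1}]$ has positive measure, so that by Fubini $\mathcal{C}_n':=\mathcal{C}_n\cap\Phi^{-1}([\sigma_{n+1}])$ has positive measure; and (b) if $\mu(N_{n+1})=0$, picking $k$ so that $\mu(U_{n+1,k})$ is smaller than an allocated error $\epsilon_{n+1}$ and setting $\mathcal{C}_{n+1}:=\mathcal{C}_n'\setminus U_{n+1,k}$ (else $\mathcal{C}_{n+1}:=\mathcal{C}_n'$). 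Any $X\in\bigcap_n\mathcal{C}_n$ (non-empty by compactness of the nested non-empty closed sets) is then weakly 2-random and satisfies $\Phi^X=\lim_n\sigma_n\in\bigcap_n V_n\subseteq\mathcal{G}$.

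The hard step is step (a) of the induction: one needs some $\sigma_{n+1}\in V_{n+1}$ with $\tilde{A}_n\cap[\sigma_{n+1}]$ of positive measure, and there is no a priori reason this should hold (a closed positive-measure set can be wholly disjoint in measure from an open dense set, e.g.\ a fat Cantor set). The plan is to maintain throughout the construction the invariant that $\mu([\sigma_n]\setminus\tilde{A}_n)$ is strictly smaller than $\mu([\sigma_n]\cap V_{n+1})$ (positive because $V_{n+1}$ is open dense). Under this invariant $\tilde{A}_n\cap V_{n+1}$ has positive measure, hence admits a Lebesgue density point $Y_0$, and any sufficiently long prefix $\sigma_{n+1}\prec Y_0$ with $[\sigma_{n+1}]\subseteq V_{n+1}$ satisfies $\mu_{\mathrm{even}}(\tilde{A}_n\cap[\sigma_{n+1}])>0$ by the density-$1$ property at $Y_0$. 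The only way the shrinkage $\mathcal{C}_n'\setminus U_{n+1,k}$ in step (b) can remove a $Y$ from the new $\tilde{A}_{n+1}$ is if the entire fiber at $Y$ is swallowed by $U_{n+1,k}$; a Markov-type argument bounds the measure of such bad $Y$'s by $\mu(U_{n+1,k})<\epsilon_{n+1}$. Choosing the errors $\epsilon_n$ so that their cumulative sum stays below the (non-effectively known) gap $\mu([\sigma_n]\cap V_{n+1})$ at every stage preserves the invariant and closes the induction.
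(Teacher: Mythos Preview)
Your approach has a fundamental obstruction that cannot be repaired by adjusting the inductive bookkeeping: the even-bits functional $\Phi^X(n)=X(2n)$ is too rigid. If $X$ is weakly 2-random then $X$ is Martin-L\"of random, and by van Lambalgen's theorem the even half $\Phi^X$ is itself Martin-L\"of random. But no Martin-L\"of random sequence is even weakly 1-generic (each level of a universal Martin-L\"of test is a dense $\Sigma^0_1$ set, so every weakly 1-generic is captured by the test). Taking $\mathcal{G}$ to be the comeager set of $1$-generics, there is therefore \emph{no} weakly 2-random $X$ with $\Phi^X\in\mathcal{G}$, and your construction cannot succeed for this $\mathcal{G}$ no matter how the $\sigma_n$, $\epsilon_n$ are chosen.

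The technical symptom of this obstruction is exactly the step you flag as hard: your ``Markov-type argument'' does not give the claimed bound. Removing $U_{n+1,k}$ of measure $<\epsilon_{n+1}$ can annihilate the $\mathcal{C}_n'$-fibers over a set of $Y$'s of measure far larger than $\epsilon_{n+1}$, because those fibers may already be tiny. For a toy illustration (ignoring $\sigma_{n+1}$): if the fiber of $\mathcal{C}_n'$ at each $Y$ is the cylinder $[Y\uh m]$ of measure $2^{-m}$, and $U_{n+1,k}=\{Y\otimes Z: Y(m)=0,\ Z\uh m=Y\uh m\}$ has measure $2^{-m-1}$, then half of all $Y$'s lose their entire fiber. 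So the invariant cannot be maintained in general, and indeed by the previous paragraph it must eventually fail.

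The paper's proof takes a completely different route. It uses Ku\v cera--G\'acs coding with respect to a shrinking sequence of $\Pi^0_1$ classes $\P_0\supseteq\P_1\supseteq\cdots$, where at each stage $\P_n$ is refined so as to avoid the $n$-th $\Pi^0_2$ nullset. The decoding functional $\Gamma$ is \emph{not} a bit-projection: it runs, for each $t$, a sub-procedure that guesses a stage-$t$ approximation to the (non-computable) classes $\P_n$, and it is arranged so that once any finite initial segment of the encoding is fixed, all sufficiently late sub-procedures decode correctly. This tolerance for finitely many early errors is precisely what lets one force $\Gamma^X$ into each dense open set while keeping $X$ inside every $\P_n$, hence weakly 2-random. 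The moral is that the reduction from $X$ to the generic must be adaptive; a fixed coordinate projection imposes too much structure on $X$ to coexist with randomness.
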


The rest of this section is dedicated to the proof of Theorem~\ref{thm:w2r-may-compute-generic}. The main ideas are the same as the ones use by Barmpalias et al.\ in~\cite{BarmpaliasDN2011}, and there is little doubt that they would have been able to refine their proof to get Theorem~\ref{thm:w2r-may-compute-generic} had they been considering the problem of coding generics into randoms. Nonetheless, some adaptations are needed, and this is what we provide below. Also, this is more an expository choice, but our proof differs from Barmpalias et al.'s by the characterization we use of weak 2-randomness: while they used the fact that an $X \in \cs$ is weakly 2-random iff it is Martin-L\"of random and forms a minimal pair with $\emptyset'$ (see~\cite{DowneyH2010}), we directly use the definition of weak 2-randomness, that is, $X$ is weakly 2-random iff it does not belong to any $\Pi^0_2$ nullset. 

The main tool we need for our proof is the so-called Ku\v cera-G\'acs coding, which allows one to encode any information into a Martin-L\"of random real. Let us review the basic mechanisms of this technique. 

Ku\v cera-G\'acs coding begins by fixing a $\Pi^0_1$ class $\mathcal{R}$ containing only Martin-L\"of random sequences (in particular, $\mathcal{R}$ has positive measure). Ku\v cera proved that this class has the following property: There exists a computable function~$h$ such that for any $\Pi^0_1$ class~$\mathcal{P}$ contained in $\mathcal{R}$, and any $\sigma \in \str$,
\[
[\sigma] \cap \mathcal{P} \not= \emptyset\  \Leftrightarrow \ \mu([\sigma] \cap \mathcal{P}) > 2^{-h(\sigma,\mathcal{P})}
\]
(in the above equivalence and in what follows, a $\Pi^0_1$ class as an argument should be read as an index for this $\Pi^0_1$ class). In particular this means that when $[\sigma] \cap \mathcal{P} \not= \emptyset$, $\sigma$ has at least two extensions~$\tau$ of length $h(\sigma,\mathcal{P})$ such that $[\tau] \cap \mathcal{P} \not= \emptyset$. The Ku\v cera-G\'acs coding, in its simpler form, consists in coding $0$ by $\tau_{\mathit{left}}$, the leftmost such $\tau$ and $1$ by the $\tau_{\mathit{right}}$ rightmost one. Indeed, knowing $\sigma$, $\P$, and given a $\tau$ which is either the leftmost or rightmost string of length $h(\sigma,\mathcal{P})$, we can figure out which is which because the strings $\tau$ such that $[\tau] \cap \mathcal{P} \not= \emptyset$ form a co-c.e.\ set. One can then encode a second bit by an extension of $\tau$ of length $h(\tau, \P)$, and iterate the process above $\tau$. If we were to continue this process indefinitely, since coding is monotonic (each time we encode one more bit the new code word is an extension of the previous one), we can take the union of all the codewords to get a sequence~$X \in \cs$ from which we can can computably recover all the bits we encoded during the construction. Since at  each step of the process we ensure that the new codeword $\tau$ satisfies $[\tau] \cap \P \not= \emptyset$, this means that $X$ has arbitrarily long prefixes $X \uh n$ such that $[X \uh n] \cap \P \not= \emptyset$, and thus $X \in \P$, as $\P$ is a closed set. 

Coming back to finite encoding, the Ku\v cera-G\'acs technique gives us a (non-computable) function $KG: \str \times \str \times \omega \rightarrow \str$ such that $KG(\xi \mid \sigma, \P)$ is the encoding of the string~$\xi$ above $\sigma$ within the $\Pi^0_1$ class $\P$ following the above technique, and thus enjoying the following properties for all $\xi$, $\sigma$, and $\P$ a $\Pi^0_1$ subset of $\mathcal{R}$:
\begin{itemize}
\item $KG(\xi \mid \sigma,\P) \exts \sigma$
\item $[\sigma] \cap \P \not= \emptyset$, then $[KG(\xi \mid \sigma, \P)]  \cap \P \not= \emptyset$
\item $KG(\;\cdot \mid \sigma, \P)$ is one-to-one for every fixed $\sigma, \P$; furthermore, up to composing with a prefix-free encoding of $\str$, we can assume that for a fixed $(\sigma, \P)$, the range of $KG(\;\cdot \mid \sigma, \P)$ is prefix-free.
\item There exists an effective `decoding' procedure, which we denote by $KG^{-1}$, which is a partial computable function such that (a) $KG^{-1}(\tau \mid \sigma,\P)=\xi$ when $\tau=KG(\xi \mid \sigma,\P)$ and (b) for a fixed $(\sigma, \P)$, the domain of $KG^{-1}(\;\cdot \mid \sigma, \P)$ is prefix-free.
\end{itemize}

Now, we want to encode information into a weakly 2-random sequence. Of course since a weakly 2-random sequence cannot compute any non-computable $\Delta^0_2$ set, we cannot hope for an encoding which can be perfectly decoded and thus the decoding procedure will be allowed to make errors.

 The idea is to sequentially use Ku\v cera-G\'acs codings where the $\Pi^0_1$ class shrinks at each step in order to make the union of the codewords a weakly 2-random sequence.   To do this, let $(\U^e_k)_{e,k\in\omega}$ be an effective enumeration of $\Sigma^0_1$ subsets of $\cs$ such that every $\Pi^0_2$ set is equal to $\bigcap_k \U^e_k$ for some $e$. Without loss of generality, we can ensure that $\U^e_{k+1} \subseteq \U^e_k$ for all~$e,k$. We let $\C^e_k$ be the complement of $\U^e_k$. We also let $e^*_1< e^*_2 < \ldots$ the sequence of indices~$e$ such that $\bigcap_k \U^e_k$ is a nullset. This is, of course, not a computable sequence, but the idea is to make this sequence part of the encoded information. 

Let $g: \omega \times \str \times \omega \rightarrow \omega \cup \{\infty\}$ be the function defined by 
\[
g(e, \sigma, \P) = \inf \bigl\{k \mid  [\sigma] \cap \C^e_k \cap \P \not= \emptyset\bigr\}.
\]
Observe that $g$ is lower semi-computable. 

Let us fix a computable, one-to-one pairing function $\tuple{ \cdot,\cdot}:\omega\times\str\rightarrow\str$.  Given a sequence of strings $\xi_1, \ldots , \xi_k$, its \emph{W2R-encoding}, denoted by $E(\xi_1, \ldots, \xi_k)$ is the string $\tau_1\tau_2\cdots \tau_k$ where
\[
\left\{
\begin{array}{l}
\P_0= \mathcal{R}\\
\tau_1 = KG(\tuple{e^*_1, \xi_1} \mid \Lambda, \P_0)
\end{array}
\right.
\]
and for $1 \leq n < k$,
\[
\left\{
\begin{array}{l}
\P_{n} = \P_{n-1} \cap \C^{e^*_n}_{g(e^*_n,\tau_n,\P_{n-1})}\\
\tau_{n+1} = KG(\tuple{e^*_{n+1}, \xi_{n+1}} \mid \tau_n, \P_{n}).
\end{array}
\right.
\]

By construction, $E(\xi_1, \ldots, \xi_{k})$ is a prefix of $E(\xi_1, \ldots, \xi_{k+1})$ for all~$k$, so we can extend the definition of $E$ to infinite sequences of strings $(\xi_i)_{i \in \omega}$ by setting 
\[
E\bigl((\xi_i)_{i \in \omega}\bigr) = \bigcup_k E(\xi_1, \ldots, \xi_k).
\]

Moreover, the construction ensures that  $E\bigl((\xi_i)_{i \in \omega}\bigr)$ belongs to all $\P_n$, and $\P_{n+1}$ is chosen to be disjoint from $\bigcap_k \U^{e^*_n}_k$, the $n$-th $\Pi^0_2$ nullset.  Thus, $E\bigl((\xi_i)_{i \in \omega}\bigr)$ is weakly 2-random for any sequence $(\xi_i)_{i \in \omega}$. 

Let us now define a `decoding' functional $\Gamma$. This functional will make `errors' in the decoding process, i.e., we will not have $\Gamma^{E((\xi_i)_{i \in \omega})}=\xi_1\xi_2\cdots$. However, we will ensure the following property: if $\xi_1, \ldots, \xi_k$ are fixed, there is an $r$ such that for any  extension $\xi_{k+1}\xi_{k+2}\cdots$ of the sequence, the prefix of $\Gamma^{E((\xi_i)_{i \in \omega})}$ of size $|\xi_1\xi_2\cdots\xi_{k+1}|$ differs from $\xi_1\xi_2\cdots\xi_{k+1}$ on at most~$r$ bits. 

The procedure $\Gamma$ is defined as follows. On input~$X$, for all~$t$ in parallel, $\Gamma$ runs a sub-procedure (which we call a \emph{$t$-sub-procedure}) that tries to find a prefix $\tau_1 \cdots \tau_k$ of~$X$ and a sequence of triples $\tuple{a_n,\zeta_n,\Q_n}_{1 \leq n \leq k}$  such that
\[
\left\{
\begin{array}{l}
\Q_0= \mathcal{R}\\
\tuple{a_1, \zeta_1} = KG^{-1}(\tau_1 | \Lambda, \Q_0)
\end{array}
\right.
\]
and for $1 \leq n < k$,
\[
\left\{
\begin{array}{l}
\Q_{n} = \Q_{n-1} \cap \C^{a_n}_{g(a_n,\tau_n,\Q_{n-1})[t]}\\
\tuple{a_{n+1}, \zeta_{n+1}} = KG^{-1}(\tau_{n+1} | \tau_n, \Q_{n}).
\end{array}
\right.
\]
Note that there is at most one such sequence because $KG$ is prefix-free and one-to-one for each fixed pair of conditions $(\sigma,\P)$. If such a sequence is found, then setting $\zeta=\zeta_1\cdots\zeta_k$,   $\Gamma^X(i)$ is defined to be $\zeta(i)$ for any~$i \leq \min(t,|\zeta|-1)$ on which $\Gamma^X(i)$ has not yet been defined by other sub-procedures with parameter $t'< t$. 

We now prove two claims which are going to allow us to conclude the proof.

\begin{claim}
For any sequence of strings $\xi_1, \ldots, \xi_k$, there exists a $N\in\omega$ such that for any $\xi_{k+1}$, if $\xi=\xi_1 \cdots \xi_{k+1}$ has length at least~$N$, then $\Gamma^{X}(i)=\xi(i)$ for any $X$ extending $E(\xi_1,\dots,\xi_{k+1})$ and $i \geq N$.  
\end{claim}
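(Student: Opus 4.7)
The plan is to exploit the lower semi-computability of $g$: the stage-$t$ values $g(\cdot,\cdot,\cdot)[t]$ used inside the $t$-sub-procedure increase monotonically to the true values used in the encoding, and hence stabilize in finitely many stages. Let me first name the data of the ``true'' encoding: set $\P_0=\mathcal{R}$, $\tau_1=KG(\tuple{e^*_1,\xi_1}\mid\Lambda,\P_0)$, and for $1\leq n\leq k$ set $\P_n=\P_{n-1}\cap\C^{e^*_n}_{g(e^*_n,\tau_n,\P_{n-1})}$ and $\tau_{n+1}=KG(\tuple{e^*_{n+1},\xi_{n+1}}\mid\tau_n,\P_n)$, as in the definition of $E(\xi_1,\dots,\xi_{k+1})$. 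For each $n=1,\dots,k$, let $T_n$ be the least stage at which $g(e^*_n,\tau_n,\P_{n-1})[t]=g(e^*_n,\tau_n,\P_{n-1})$; each $T_n$ depends only on $\xi_1,\dots,\xi_n$, so I may set $N=\max(T_1,\dots,T_k)+1$, which depends only on $\xi_1,\dots,\xi_k$.

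The heart of the argument is an induction on $n\leq k+1$ showing that for every $t\geq N$ and every $X$ extending $E(\xi_1,\dots,\xi_{k+1})$, the $t$-sub-procedure reads $\tau_n$ as its $n$-th codeword, satisfies $\Q_{n-1}[t]=\P_{n-1}$, and decodes $\tuple{a_n,\zeta_n}=\tuple{e^*_n,\xi_n}$. The base case $n=1$ follows from $\Q_0=\mathcal{R}$ and $KG^{-1}(\tau_1\mid\Lambda,\mathcal{R})=\tuple{e^*_1,\xi_1}$. For the inductive step with $n+1\leq k+1$, the inductive hypothesis together with $t\geq N>T_n$ (note $n\leq k$) forces $\Q_n[t]=\P_n$, and then $KG^{-1}(\tau_{n+1}\mid\tau_n,\P_n)=\tuple{e^*_{n+1},\xi_{n+1}}$ by the very definition of the encoding. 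As a consequence, the sub-procedure's decoded string $\zeta$ extends $\xi$, so $|\zeta|\geq|\xi|$ and $\zeta(j)=\xi(j)$ for every $j<|\xi|$.

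A short priority argument then finishes. For any $i$ with $N\leq i<|\xi|$, no sub-procedure with parameter $t'<N$ can have defined $\Gamma^X(i)$, since such a sub-procedure only writes to positions $j\leq t'<N\leq i$. The $i$-sub-procedure itself has $t=i\geq N$, so by the previous paragraph its decoded string $\zeta$ has $i\leq|\zeta|-1$, and it sets $\Gamma^X(i):=\zeta(i)=\xi(i)$, as required.

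The subtle point -- and the reason the statement is formulated this way -- is uniformity in $\xi_{k+1}$: $N$ must be chosen from $\xi_1,\dots,\xi_k$ alone, which is why the induction uses only the stabilization stages $T_1,\dots,T_k$, each determined before $\xi_{k+1}$ is known. The stage $T_{k+1}$ might well exceed $N$, but this is harmless: correctly decoding the codeword $\tau_{k+1}$ only requires $\P_k$, not $\P_{k+1}$, to have been reconstructed exactly.
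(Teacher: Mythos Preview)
Your proof is correct and follows the same approach as the paper: wait for the lower-semicomputable approximations $g(\cdot,\cdot,\cdot)[t]$ to stabilize, then argue that every $t$-sub-procedure with $t\geq N$ correctly reconstructs $\P_0,\dots,\P_k$ and hence decodes $\xi_1,\dots,\xi_{k+1}$. Your write-up is in fact more careful than the paper's on the key point you flag in your last paragraph: the paper asks for stabilization of $g(e^*_n,\tau_n,\P_{n-1})$ for all $n\leq k+1$, but the case $n=k+1$ involves $\tau_{k+1}$, which depends on $\xi_{k+1}$, so the resulting $N$ would not be uniform in $\xi_{k+1}$. As you observe, stabilization for $n\leq k$ is all that is needed, since decoding $\tau_{k+1}$ only requires that $\Q_k=\P_k$, not that $\Q_{k+1}=\P_{k+1}$; this makes $N$ depend only on $\xi_1,\dots,\xi_k$, as the claim demands.
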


\begin{proof}
Fix $\xi_1, \ldots, \xi_k$, let $\xi_{k+1}$ be any string, and let $X$ be an infinite sequence extending $E(\xi_1, \ldots, \xi_{k+1})$. 
Let $\tau_1, \ldots, \tau_{k+1}$ and $\P_1, \ldots \P_{k+1}$ be the strings and $\Pi^0_1$ classes inductively built in the definition of $E(\xi_1, \ldots, \xi_{k+1})$. Recall that the function~$g$ is lower semi-computable and $g(e^*_n,\tau_n,\P_{n-1})$ is always finite, thus there is an~$N$ such that
\[
g(e^*_n,\tau_n,\P_{n-1})[t]=g(e^*_n,\tau_n,\P_{n-1})
\]
 for all $n \leq k+1$ and all $t \geq N$.  

This means that for $t \geq N$, the $t$-sub-procedure of $\Gamma$ will eventually find the desired sequence $\tuple{a_n,\zeta_n,\Q_n}_{1 \leq n \leq k+1}$  because $\tuple{e^*_n,\xi_n,\P_n}_{1 \leq n \leq k+1}$ satisfies that property. By uniqueness, we must have $\zeta_n=\xi_n$, $a_n=e^*_n$ and $\Q_n=\P_n$ for $n \leq k+1$. By definition of $\Gamma$, the $t$-subprocedure for $t < N$ can only define $\Gamma^X(i)$ for $i < N$. Thus if $i \geq N$ one must have $\Gamma^X(i)=\xi(i)$ where $\xi=\xi_1\cdots\xi_{k+1}$. This proves our claim. 
\end{proof}

\begin{claim}
Let $\xi_1, \ldots, \xi_k$ be fixed and let $\U$ be a dense open set. There exists $\xi_{k+1}$ such that $\Gamma^{E(\xi_1, \ldots, \xi_{k+1})} \exts \sigma$ for some $\sigma$ such that $[\sigma] \subseteq \U$. 
\end{claim}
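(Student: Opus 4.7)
The plan is to combine the previous claim with the density of $\U$. Applied to $\xi_1,\ldots,\xi_k$ and any $\xi_{k+1}$ making $|\xi_1\cdots\xi_{k+1}|$ sufficiently large, the previous claim yields an integer $N$ such that the output $\Gamma^{E(\xi_1,\ldots,\xi_{k+1})}$ agrees with $\xi=\xi_1\cdots\xi_{k+1}$ on positions $[N,|\xi|-1]$; the only freedom left is the value of the first $N$ bits $\eta\in\{0,1\}^N$, which may differ from $\xi[0:N]$ because of the small-$t$ sub-procedures of $\Gamma$ (which use wrong approximations $\Q_j[t]\subsetneq \P_j$).

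First I would use density: pick $\sigma\in \U$ extending $\xi_1\cdots\xi_k$. A natural tentative choice, $\xi_{k+1}:=\sigma[|\xi_1\cdots\xi_k|:|\sigma|]$, makes $\xi=\sigma$, so by the previous claim the output already agrees with $\sigma$ on positions $[N,|\sigma|-1]$; all that remains is to ensure that the first $N$ bits of the output also equal $\sigma[0:N]=\xi[0:N]$, i.e.\ that the decoder introduces no error in that range.

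To secure this, I would refine the choice by appending a tail $\rho\in\str$, writing $\xi_{k+1}=\sigma[|\xi_1\cdots\xi_k|:|\sigma|]\cdot\rho$; since $\U$ is open, the desired conclusion is preserved by this extension. I would pick $\rho$ so that $\tau_{k+1}=KG(\langle e^*_{k+1},\xi_{k+1}\rangle\mid\tau_k,\P_k)$ does not lie in the range of $KG(\cdot\mid\tau_k,\Q)$ for any of the countably many incorrect conditions $\Q$ arising in the small-$t$ sub-procedures of $\Gamma$. The set of forbidden values for $\tau_{k+1}$ is a countable union of countable sets, while the range of $KG(\cdot\mid\tau_k,\P_k)$ above $\sigma[|\xi_1\cdots\xi_k|:|\sigma|]$ is infinite, so such $\rho$ exists. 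Under this genericity, every small-$t$ sub-procedure either halts its decomposition before $\tau_{k+1}$ (writing only correct bits $\xi(i)$ in the first $N$ positions) or has all its $\Q_j$ equal to the true $\P_j$ and thus decodes $\xi$ correctly. Hence $\eta=\xi[0:N]=\sigma[0:N]$ and the output extends $\sigma\in\U$. The main obstacle I foresee is making this generic-avoidance argument precise: one must verify that the countable union of wrong $KG$-ranges does not cover all admissible $\tau_{k+1}$, which follows from the fact that $KG(\cdot\mid\tau_k,\P_k)$ produces infinitely many codewords above any given prefix.
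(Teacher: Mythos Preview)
Your argument has a genuine gap at the parenthetical assertion that a small-$t$ sub-procedure which ``halts its decomposition before $\tau_{k+1}$'' writes only correct bits $\xi(i)$. This is not justified, and in general it is false. Recall how the $t$-sub-procedure works: after correctly recovering $\tau_1$ and $\zeta_1=\xi_1$ (since $\Q_0=\P_0$), it forms $\Q_1=\P_0\cap\C^{e^*_1}_{g(e^*_1,\tau_1,\P_0)[t]}$, which for small~$t$ may be a proper subset of $\P_1$. It then applies $KG^{-1}(\cdot\mid\tau_1,\Q_1)$ to the tail of~$X$. Because $\Q_1\neq\P_1$, this may parse off a chunk $\tau_2'$ that is \emph{not} the true $\tau_2$ --- possibly a short prefix of $\tau_2$, or a string overlapping $\tau_2\tau_3$, etc.\ --- yielding a wrong $\zeta_2'$ and wrong $a_2'$, and the error propagates. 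Crucially, such a wrong parse can lie entirely inside the already-fixed block $\tau_1\cdots\tau_k$, so the resulting incorrect output bits at positions in $[\,|\xi_1|,\,N)$ are completely determined by $\xi_1,\ldots,\xi_k$ and are unaffected by any choice of $\xi_{k+1}$ or its tail $\rho$. Your avoidance argument only concerns what happens when a sub-procedure's (possibly wrong) parse reaches the boundary $\tau_k$ with the correct $\Q$-sequence intact; it says nothing about sub-procedures that go astray earlier and never even see that boundary.

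This is precisely why the paper does \emph{not} try to force $\eta=\xi[0{:}N]$. Instead it concedes that the first $N$ output bits may be an arbitrary string $\eta\in 2^N$, and exploits the density of $\U$ in a different way: for each $\eta$ of length~$N$ the shifted set $\U_\eta=\{Z:\eta Z\in\U\}$ is dense open, so the finite intersection $\bigcap_{|\eta|=N}\U_\eta$ is still dense, hence contains some cylinder $[\zeta]$. One then chooses $\xi_{k+1}$ so that the bits of $\xi_1\cdots\xi_{k+1}$ beyond position~$N$ begin with $\zeta$; the previous claim guarantees these are reproduced faithfully, so $\Gamma^{E(\xi_1,\ldots,\xi_{k+1})}$ extends $\eta\zeta$ for whatever $\eta$ the decoder happened to produce, and $[\eta\zeta]\subseteq\U$. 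The point you are missing is that density is strong enough to absorb \emph{all} $2^N$ possible error patterns simultaneously, so there is no need to eliminate the errors themselves.
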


\begin{proof}
By the previous claim, there exists an~$N\in\omega$ such that for any $\xi_{k+1}$, $\Gamma^X(i)=\xi(i)$ for all $i \geq N$, where $\xi=\xi_1\cdots\xi_{k+1}$. We can assume that $N \geq |\xi_1\cdots\xi_{k}|$.

Now, given a string $\eta\in\str$, we denote by $\U_{\eta}$ the set $\{Z \mid \eta Z \in \U\}$. Since $\U$ is dense, it is in particular dense above $\eta$, so $\U_\eta$ is dense. Consider the open set $\V = \bigcap_{|\eta|=N} \U_\eta$. A finite intersection of dense open sets is dense and, in particular, non-empty, so there must a $\zeta$ such that $[\zeta] \subseteq \bigcap_{|\eta|=N} \U_\eta$, which is equivalent to saying that $[\eta\zeta] \subseteq \U$ for all $\eta$ of length~$N$. Thus, it suffices to choose $\xi_{k+1}$ so that the bits $\xi=\xi_1\cdots\xi_{k+1}$ after position~$N$ are an extension of~$\zeta$ to get the desired result. 

\end{proof}

This last claim is just what we need to complete the proof of Theorem~\ref{thm:w2r-may-compute-generic}. Let $\mathcal{G}$ be comeager and $(\U_k)_{k \in \N}$ a family of dense open sets such that $\bigcap_k \U_k \subseteq \mathcal{G}$. The previous claim allows us to construct by induction a sequence $(\xi_k)_{k \in \N}$ of strings such that for all~$k$, $\Gamma^{X}$ is guaranteed to be in~$\U_k$ when $X$ extends $E(\xi_1,\ldots,\xi_k)$ and $\Gamma^X$ is total. Thus, taking $X=E((\xi_k)_{k \in \N})$, we have that $\Gamma^X$ is total, belongs to all $\U_k$, and as explained earlier on, $X$ must be weakly 2-random. Our theorem is proven.

\section{Conclusion}

The following table recaps the various interactions between randomness and genericity discussed in the paper: \\

\begin{tabular}{|c|c|c|c|c|}
\hline
 & $n$-gen. ($n \geq 2$) & weakly 2-gen.\ & pb-gen.\ & 1-gen.\ \\
 \hline
$n$-random ($n \geq 2$) & min.\ pair & min.\ pair & min.\ pair & computes\\
\hline
weakly 2-random & may compute & may compute  & may compute & may compute\\
\hline 
Demuth random & min.\ pair & min.\ pair & min.\ pair & computes\\
\hline
$1$-random & may compute & may compute & may compute & may compute\\
\hline
\end{tabular}
$ $ \\

\bigskip

\noindent For a given pair consisting of a randomness notion and a genericity notion:
\begin{itemize}
\item `min.\ pair' means that for any random $X$ and any generic $G$, $(X,G)$ forms a minimal pair in the Turing degrees;
\item `may compute' means that there is a random~$X$ and a generic~$G$ such that $X$ computes~$G$; and
\item `computes' means that any random $X$ computes some generic~$G$.
\end{itemize}

\vspace{5mm}

We note that these three cases do not form an exhaustive list of possibilities. It would for example be interesting to find natural pair of one randomness notion and one genericity notion such that a random never computes a generic but that a random and a generic do not necessarily form a minimal pair.


\bibliographystyle{alpha}
\bibliography{demuthgenericity}

\end{document}